\documentclass[11pt]{article}
\usepackage{amsfonts}
\usepackage{mathrsfs}

\usepackage[latin1]{inputenc}
\usepackage{amsmath,amssymb,color}
\usepackage{latexsym}
\usepackage{epstopdf}
\usepackage{geometry}   
\usepackage[active]{srcltx}
\usepackage{graphicx}
\usepackage{epstopdf}
\usepackage[
bookmarks=true,         
bookmarksnumbered=true, 
colorlinks=true, pdfstartview=FitV, linkcolor=blue, citecolor=blue,
urlcolor=blue]{hyperref}

 \topmargin -2cm
 \oddsidemargin -0.06cm
 \evensidemargin -0.06cm
 \textwidth 16.42cm
 \textheight 23.96cm
 \parskip 1.2pt

\newtheorem{theorem}{Theorem}[section]

\newtheorem{lemma}[theorem]{Lemma}
\newtheorem{proposition}[theorem]{Proposition}

\numberwithin{equation}{section}

\parindent.5cm
\parskip0.2cm%

\def\R{{\mathbb R}}
\def\E{{{\mathbb E}\,}}

\def\N{{\mathbb N}}

\def\cF{{\cal F}}

\def\Var{{\mathop {{\rm Var\, }}}}
\def\Cov{{\mathop {{\rm Cov\, }}}}
\def\Om{{\Omega}}
\def \eref#1{\hbox{(\ref{#1})}}
\def\nn{{\nonumber}}

\newenvironment{proof}[1][Proof]{\noindent\textit{#1.} }{\hfill \rule{0.5em}{0.5em}}

\begin{document}

\title{Asymptotic behavior for an additive functional of two independent self-similar Gaussian processes}
\date{\today}

\author{David Nualart\thanks{ D. Nualart is supported by the NSF grant
DMS1512891.} \ \
and \ Fangjun Xu\thanks{F. Xu is supported by National Natural Science Foundation of China (Grant No.11401215) and 111 Project (B14019).}\\
}

\maketitle

\begin{abstract}
\noindent  We derive the asymptotic behavior for an additive functional of two independent self-similar Gaussian processes when their intersection local time exists, using the method of moments.

\vskip.2cm \noindent {\it Keywords:} Gaussian processes, Intersection local time, method of moments, chaining argument, paring technique.

\vskip.2cm \noindent {\it Subject Classification: Primary 60F05;
Secondary 60G15, 60G22.}
\end{abstract}

\section{Introduction}
The aim of this paper is to show a central limit theorem for an additive functional of two independent identically distributed Gaussian processes.
Let  $X= \{ X_t, t\geq 0\} $ be a $d$-dimensional  centered Gaussian  process.
We assume that $X$ is  $H$-self-similar for some parameter  $H\in (0,1)$ such that $Hd<2$. That is, $X$ satisfies the scaling property
\begin{equation*}
\big\{X(ct), t\ge 0\big\}\overset{\mathcal{L}}{=}\big\{c^HX(t), t\ge 0\big\}.   \label{intr1}
\end{equation*}
for all $c>0$.

We will denote by $X^{(1)}$ and $X^{(2)}$ two independent copies of  $X$.  
For any rectangle $E \subset \R^2_+$, we will denote by $L(x,E)$   the local time  of the  two-parameter process $Z=\{ Z(t,s)= X^{(1)}_t -X^{(2)}_s, s,t \ge 0\}$,   defined,  if it exists, as the density of the occupation measure, that is,
\[
\int_{\R^d}   \varphi(x) L(x,E) dx = \int_E  \varphi(Z(t,s) )dt ds,
\]
for any measurable and bounded function $\varphi :\R^d  \rightarrow \R$, see \cite{wu_xiao} for fractional Brownian motion case.   When $E=[0,t_1]\times[0,t_2]$, $I(t_1, t_2)=L(0,E)$ is the intersection local time of $X^{(1)}$ and $X^{(2)}$ that can be defined as
\[
I(t_1, t_2)=\int^{t_1}_0\int^{t_2}_0 \delta(X^{(1)}_u-X^{(2)}_v)\, du\, dv,
\]
where $\delta$ is the Dirac delta function, see \cite{nol} for fractional Brownian motion case.

  Given an any integrable function $f:\R^d\to\R$, we are interested in the asymptotic behavior as $n\rightarrow \infty$ of the additive functional
 $ \int^{nt_1}_0\int^{nt_2}_0 f(X^{(1)}_u-X^{(2)}_v) du dv$.   Suppose that  for any rectangle $E=[0,t_1]\times[0,t_2]$ the  local time  $L(x,E)$   exists and it is continuous at $x=0$.   We also assume $Hd<2$.
  Then,
   one can easily show the following convergence in law in the space $C([0,\infty)^2)$, as $n$ tends to infinity
\begin{equation*} \label{e.0.1}
\left( n^{Hd-2}\int^{nt_1}_0\int^{nt_2}_0 f(X^{(1)}_u-X^{(2)}_v)\, du\, dv,\; t_1, t_2\ge 0 \right) \overset{\mathcal{L}}{\longrightarrow } \left( I(t_1,t_2) \int_{\R^d} f(x)\, dx,\;  t_1, t_2\ge 0 \right).
\end{equation*}
In fact, letting $E=[0,t_1]\times[0,t_2]$ and  using the self-similarity and the existence and continuity of the intersection local time at zero,  we get
\begin{align*}
n^{Hd-2}\int^{nt_1}_0\int^{nt_2}_0 f(X^{(1)}_u-X^{(2)}_v)\, du\, dv
&\overset{\mathcal{L}}{=}   n^{Hd}\int^{t_1}_0\int^{t_2}_0 f\big(n^H (X^{(1)}_u-X^{(2)}_v) \big)\, du\, dv \\
&=                                           n^{Hd} \int_{\R^d}f(n^H x)L(x,E)\, dx\\
&=                                            \int_{\R^d} f(x)L(\frac{x}{n^H},E)\, dx\\
&\overset{\mathcal{L}}{\longrightarrow } I(t_1,t_2) \int_{\R^d} f(x)\, dx.
\end{align*}

If we assume that $\int_{\R^d} f(x)\, dx=0$, then $n^{Hd-2} \int^{nt_1}_0\int^{nt_2}_0 f(X^{(1)}_u-X^{(2)}_v)\, du\, dv$ converges to $0$. It is interesting to know if there is a $\alpha>Hd-2$ such that $n^{\alpha}\int^{nt_1}_0\int^{nt_2}_0  f(X^{(1)}_u-X^{(2)}_v)\, du\, dv$ converges  to a nonzero process. 
It turns out that the natural choice is $\alpha= \frac {Hd-2}2$, the convergence is in distribution and the limit is a   mixture of Gaussian laws. The purpose of this paper is to show the corresponding  functional  central limit theorem for a large class of Gaussian processes $X$.  We will first describe the class of processes we will consider.

Along the paper $X=\{X_t, t\ge 0\}$ is a $d$-dimensional  $H$-self-similar  centered Gaussian stochastic process whose components $X^\ell$, $1\le \ell \le d$, are independent and identically distributed.   We will assume that the process $X^\ell$ satisfies the following hypotheses:

\smallskip
\noindent
{\bf (H1)}   {\it  Nondeterminism property}: 
there exists a positive constant $\kappa$ depending only on $n$ and  $H$, such that for any $0=t_0<t_1 <\dots < t_n $ and $x_i \in \mathbb{R}$, $1\le i \le n$, we have
\begin{equation} \label{eku1}
\mathrm{Var} \Big( \sum_{i=1}^n x_i   (X^\ell_{t_i} -X^\ell_{t_{i-1}}) \Big)\geq \kappa  \sum_{i=1}^n x_i^2 (t_i -t_{i-1})^{2H}.
\end{equation}

\smallskip
\noindent
{\bf (H2)}  {\it Bounds on the variance of increments}:  There exist positive constants $\gamma_0\geq 1$, $\alpha_2>0$ and nonnegative decreasing functions $\phi_{i}(\varepsilon)$ on $[0,1/\gamma_0]$ with $ \lim\limits_{\varepsilon\to 0}\phi_{i}(\varepsilon)=0$, for $i=1,2,$ such that
\[
0\leq h^{2H}(\alpha_2-\phi_{1}(\frac{h}{t}))\leq \mathrm{Var}(X^\ell_{t+h}-X^\ell_t)\leq h^{2H}(\alpha_2+\phi_{2}(\frac{h}{t}))
\]
for all $h\in[0,t/\gamma_0]$.

\smallskip
\noindent
{\bf (H3)}    {\it Bounds on the covariance of increments on disjoint intervals}:   there exists a nonnegative decreasing function $\beta(\gamma): (1,\infty) \rightarrow \mathbb{R}$ with $\lim\limits_{\gamma\to\infty}\beta(\gamma)=0$, such that,
for any $0<t_1<t_2<t_3<t_4<\infty$   such that
  $\frac{\Delta t_2}{\Delta t_4}\leq \frac{1}{\gamma}$ or $\frac{\Delta t_2}{\Delta t_4} \geq \gamma$ or $\max\left( \frac{\Delta t_2}{\Delta t_3}, \frac{\Delta t_4}{\Delta t_3}\right) \leq \frac{1}{\gamma}$, then
 \[
\big|\E\big(X^\ell_{t_4}-X^\ell_{t_3}\big)\big(X^\ell_{t_2}-X^\ell_{t_1}\big)\big|\leq  \beta(\gamma)\, \left[\E\big(X^\ell_{t_4}-X^\ell_{t_3}\big)^2\right]^{\frac{1}{2}}\left[\E\big(X^\ell_{t_2}-X^\ell_{t_1}\big)^2 \right]^{\frac{1}{2}},
\] 
where $\Delta t_i=t_i-t_{i-1}$ for $i=2,3,4$.

From results in \cite{sxy}, we see that the following Gaussian processes satisfy  the above hypotheses:

\noindent
(i) {\it Bifractional Brownian motion}. The covariance function of this process  is given by
\[
\E(X^\ell_t X^\ell_s)=2^{-K_0}[(t^{2H_0}+s^{2H_0})^{K_0}-|t-s|^{2H_0K_0}],
\]
where $H_0\in(0,1)$ and $K_0\in(0,1]$.  
See \cite{Houdre, RussoTudor06} for the main properties of this process, and note that $K=1$ gives the classic fractional Brownian motion case with Hurst parameter $H=H_0$.
Hypotheses {\bf (H1)}-{\bf (H3)} hold with $H=H_0K_0$.  
 
\noindent
(ii) {\it Subfractional Brownian motion}.  The covariance function of this process  is given by
\[
\E(X^\ell_t X^\ell_s)=t^{2H}+s^{2H}-\frac{1}{2}[(t+s)^{2H}+|t-s|^{2H}],
\]
where $H\in(0,1)$.  This Gaussian process has been studied in \cite{Bojdecki, Chavez} and it satisfies   {\bf (H1)}-{\bf (H3)}.
 
 \noindent
 (iii) More generally, the  Gaussian self-similar processes considered in  \cite{HN} satisfy Hypotheses {\bf (H2)} and {\bf (H3)} in the particular case $\alpha =2\beta=2H$.

 It can be proved  that if the components of $X$ satisfy Hypothesis 
{\bf (H1)}  and $Hd<2$, then  local time  $L(x,E)$   exists and  is continuous in $x$.    Indeed, this property can be established using the arguments of the  proof of Theorem 8.1 in  \cite{Be}, together with the lower bound for the variance based on the nondeterminism property obtained in Subsection 3.1 below.

In order to  formulate our result  we introduce the
following space of functions. Fix a number $\beta\in(0,2)$, define
\[
H^{\beta}_0=\Big\{f\in L^1(\R^d):\, \int_{\R^d} |f(x)||x|^{\beta}\, dx<\infty \quad\text{and}\quad \int_{\R^d} f(x)\, dx=0 \Big\}. \label{beta}
\]
We will denote by $\widehat{f}$ the Fourier transform of a function $f$.

The next theorem is the main result of this paper, which is a functional version of the central limit theorem  in the case where $X=B^H$ is a $d$-dimensional fractional Brownian motion (fBm)  with Hurst parameter $H$, proved in \cite{nx2}.
\begin{theorem}  \label{thm1} Suppose $\frac{2}{d+2}<H<\frac{2}{d}$ and $f\in H^{\frac{2}{H}-d}_0$. Under hypotheses {\bf (H1)-(H3)}, then
\[
n^{\frac{Hd-2}{2}} \int^{nt_1}_0\int^{nt_2}_0  f(X^{(1)}_u-X^{(2)}_v)\, du\, dv\overset{\mathcal{L}}{\longrightarrow} \sqrt{D_{f,H,d}}\; \Lambda(t_1, t_2),
\]
in $C([0,\infty)^2)$, as $n\to\infty$, where, conditionally  on $X^{(1)}$ and $X^{(2)}$, $\Lambda(t_1, t_2)$ is a two-parameter centered Gaussian process with covariance function
\begin{align*}
\E\left[\Lambda\left(t_1, t_2\right) \Lambda\left(s_1, s_2\right)\right]= I(t_1\wedge s_1, t_2\wedge s_2)
\end{align*}
and 
\[
D_{f,H,d}=\left(2\left(\frac{2}{\alpha_2}\right)^{\frac{1}{2H}}\, \Gamma\Big(\frac{2H+1}{2H}\Big)\right)^2\left(\frac{1}{(2\pi)^d}\int_{\R^d} |\widehat{f}(x)|^2|x|^{-\frac{2}{H}}\, dx\right).
\]
\end{theorem}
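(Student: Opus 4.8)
The plan is to combine the method of moments for the convergence of the finite-dimensional distributions with a tightness estimate, following the scheme used for fBm in \cite{nx2}. Write $F_n(t_1,t_2) = n^{(Hd-2)/2}\int_0^{nt_1}\int_0^{nt_2} f(X^{(1)}_u - X^{(2)}_v)\,du\,dv$. Since the target $\sqrt{D_{f,H,d}}\,\Lambda$ is, conditionally on $(X^{(1)},X^{(2)})$, centered Gaussian, its law is determined by its moments once one verifies a Carleman-type growth bound (which holds because the moments of $I$ grow slowly enough). By the conditional Wick formula, its odd joint moments vanish, and for an even number $2k$ of points,
\[
\E\Big[\prod_{j=1}^{2k}\sqrt{D_{f,H,d}}\,\Lambda(t_1^{(j)}, t_2^{(j)})\Big] = D_{f,H,d}^k\sum_{\pi}\E\Big[\prod_{\{a,b\}\in\pi} I\big(t_1^{(a)}\wedge t_1^{(b)}, t_2^{(a)}\wedge t_2^{(b)}\big)\Big],
\]
where $\pi$ runs over the complete pairings of $\{1,\dots,2k\}$. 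Thus it suffices to show that $\E[\prod_j F_n(t_1^{(j)}, t_2^{(j)})]$ converges to this quantity.

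The computation starts from the Fourier inversion $f(x) = (2\pi)^{-d}\int_{\R^d}\widehat f(\xi)e^{i\langle\xi,x\rangle}d\xi$. Expanding the product and using the independence of $X^{(1)}$ and $X^{(2)}$, together with the fact that the components of each process are i.i.d., every joint moment becomes an integral over Fourier variables $\xi_1,\dots,\xi_m\in\R^d$ and time variables $u_j\in[0,nt_1^{(j)}]$, $v_j\in[0,nt_2^{(j)}]$, of $\prod_j\widehat f(\xi_j)$ times a product of two Gaussian characteristic functions of the form $\exp(-\tfrac12\sum_\ell \Var(\sum_j \xi_j^\ell X^{(1),\ell}_{u_j}))$ and its $X^{(2)}$ analogue. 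I would order the $u$'s and the $v$'s, rewrite the variances in terms of increments over consecutive intervals, and control the $\xi$-integrals by the nondeterminism lower bound {\bf (H1)}, which guarantees that these Gaussian kernels are nondegenerate and integrable.

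The core is the \emph{pairing technique}. Decompose the time domain according to the clustering of the ordered times. By {\bf (H3)}, whenever two groups of indices are separated by a large gap (in the sense of the ratios in {\bf (H3)}) the corresponding increments decorrelate and the characteristic function approximately factorizes across clusters, with the error controlled by {\bf (H1)}. A chaining argument over the dyadic scales of the inter-time gaps then shows that any configuration in which the $2k$ indices do not split into $k$ clusters of size exactly two, with the $X^{(1)}$-clusters matching the $X^{(2)}$-clusters, carries a lower power of $n$ and is negligible after multiplication by $n^{k(Hd-2)}$; in particular the odd moments vanish. For each surviving pairing $\pi$, change variables within each pair to a center-of-mass time and a difference time; integrating the difference variables over $\R$ (both orderings, producing the factor $2$) and using $\Var(X^\ell_{t+h}-X^\ell_t)\sim\alpha_2 h^{2H}$ from {\bf (H2)} yields the one-dimensional integral $\int_{\R}e^{-\frac12\alpha_2 h^{2H}|\xi|^2}dh = 2(2/\alpha_2)^{1/(2H)}\Gamma(\frac{2H+1}{2H})|\xi|^{-1/H}$, once for the $u$-difference and once for the $v$-difference. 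Collapsing $\widehat f(\xi_a)\widehat f(\xi_b)$ onto the diagonal $\xi_a\approx-\xi_b$, where $\int f=0$ (i.e.\ $\widehat f(0)=0$) kills the singular diagonal terms, turns the product into $|\widehat f(\xi)|^2|\xi|^{-2/H}$ and produces the constant $D_{f,H,d}$, while the center-of-mass integrals reconstruct $\E[\prod_{\{a,b\}}I(t_1^{(a)}\wedge t_1^{(b)}, t_2^{(a)}\wedge t_2^{(b)})]$ through the occupation-density definition of the intersection local time.

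Finally, tightness in $C([0,\infty)^2)$ follows from a two-parameter Kolmogorov criterion: the same moment machinery gives estimates of the form $\E[|F_n(t_1,t_2)-F_n(s_1,s_2)|^{2k}]\le C(|t_1-s_1|+|t_2-s_2|)^{1+\delta}$ for suitable $k$ and $\delta>0$, uniformly in $n$. The main obstacle is precisely the uniform-in-$n$ control of the non-paired configurations: one must show, through the interplay of {\bf (H1)} and {\bf (H3)} and a careful chaining over all gap scales, that every incomplete or mismatched clustering is genuinely of smaller order, and that the error in replacing the exact Gaussian kernels by their factorized and asymptotic versions is summable. The range $\frac{2}{d+2}<H<\frac{2}{d}$ enters exactly here and in ensuring $D_{f,H,d}<\infty$: $H<\frac2d$ gives $Hd<2$ (existence of the local time and convergence of the Gamma integrals), while $H>\frac{2}{d+2}$ keeps $\beta=\frac2H-d\in(0,2)$, so that $f\in H^{\beta}_0$ forces $\int_{\R^d}|\widehat f(\xi)|^2|\xi|^{-2/H}d\xi<\infty$.
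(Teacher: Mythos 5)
Your proposal is correct and follows essentially the same route as the paper: a method-of-moments argument (with the limit law characterized by a Carleman-type bound on the moments of the intersection local time), Fourier inversion plus Cauchy--Schwarz and the nondeterminism bound \textbf{(H1)} to decouple and control the two processes, the chaining/pairing technique to show that only matched pairings of $X^{(1)}$- and $X^{(2)}$-times survive (killing the odd moments), \textbf{(H2)} producing the constant $2(2/\alpha_2)^{1/(2H)}\Gamma(\tfrac{2H+1}{2H})|\xi|^{-1/H}$ for each difference variable, and a two-parameter Kolmogorov criterion for tightness. The only cosmetic difference is that the paper organizes the moment computation around increments over disjoint rectangles rather than raw values $F_n(t_1^{(j)},t_2^{(j)})$, which is exactly the subdivision your clustering step would force anyway.
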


Let us discuss the role of our hypotheses in this theorem. The nondeterminism property {\bf (H1)} guarantees the existence and  continuity of the self-intersection local time. 
 Hypothesis {\bf (H2)} means that  $\mathrm{Var}(X^\ell_{t+h}-X^\ell_t)$ behaves as $\alpha_2 h^{2H}$ as $h\rightarrow 0$ and  is well tailored for 
 Gaussian processes with nonstationary increments.
   The independent normal random phenomenon appearing in the second-order limit law for functionals of Gaussian processes is caused by the increments in Hypothesis {\bf (H2)}.  On the other hand, Hypothesis {\bf (H3)} characterizes the covariance of increments having no contribution to the limiting distribution.

In the Brownian motion case ($H=\frac{1}{2}$ and $d=3$), Theorem \ref{thm1} can be proved using a theorem by Weinryb and Yor \cite{weinryb_yor}. Note that the constant $D_{f,H,d}$ is finite for any $H>\frac{2}{d+2}$.    

In \cite{hnx}, we proved the following central limit theorem when $X=B^H$, assuming $Hd<1$
\begin{align*}
  \Big(  n^{\frac{Hd-1}{2}}\int_{0}^{nt} f(B^H_s)\, ds\,,
  \ t\ge 0\Big) \ \ \overset{\mathcal{L}}{\longrightarrow } \ \
  \Big(  \sqrt{C_{H,d}} \, \| f\|_{\frac{1}{H}-d}\, W (L_{t}(0))\,, t\ge
  0\Big),
\end{align*}
where $L_t(0)$ is the local time at $0$ of $B^H$ and $W$ is a Brownian motion independent of $B^H$.
Using the methodology we  develop here, the above result could be extended to a centered  $d$-dimensional self-similar Gaussian  process $X$ satisfying hypotheses {\bf (H1)-(H3)}, that is,
\begin{align*} \label{hnx}
  \Big(  n^{\frac{Hd-1}{2}}\int_{0}^{nt} f(X_s)\, ds\,,
  \ t\ge 0\Big) \ \ \overset{\mathcal{L}}{\longrightarrow } 
  \Big(  \sqrt{C_{f,H,d}}\; W (L_{t}(0))\,, t\ge
  0\Big),
\end{align*}
where 
\[
C_{f,H,d}=\left(2\left(\frac{2}{\alpha_2}\right)^{\frac{1}{2H}}\, \Gamma\Big(\frac{2H+1}{2H}\Big)\right)\left(\frac{1}{(2\pi)^d}\int_{\R^d} |\widehat{f}(x)|^2|x|^{-\frac{1}{H}}\, dx\right).
\]

So it is natural to guess that, for $N$ independent  copies  $X^{(1)}, \dots, X^{(N)}$ of  a  centered $d$-dimensional self-similar Gaussian  process $X$ satisfying hypotheses {\bf (H1)-(H3)},  assuming
 $\frac{N}{d+2}<H<\frac{N}{d}$ and $f\in H^{\frac{N}{H}-d}_0$, the following result should hold 
\begin{align*}
 & \Big(  n^{\frac{Hd-N}{2}}\int_{0}^{nt_1}\dots \int_{0}^{nt_N}  f\big(\sum^N_{i=1} X^{(i)}_{s_i}\big)\, ds\,,\;
  t_1\ge 0,\dots, t_N\ge 0\Big) \\
  &\qquad\qquad\qquad\qquad \overset{\mathcal{L}}{\longrightarrow } 
  \Big(  \sqrt{D_{f,N,H,d}}\;  \Lambda(t_1,\dots, t_N)\,,\; t_1\ge
  0,\dots, t_N\ge 0\Big),
\end{align*}
where 
\[
D_{f,N,H,d}=\left(2\left(\frac{2}{\alpha_2}\right)^{\frac{1}{2H}}\, \Gamma\Big(\frac{2H+1}{2H}\Big)\right)^N\left(\frac{1}{(2\pi)^d}\int_{\R^d} |\widehat{f}(x)|^2|x|^{-\frac{N}{H}}\, dx\right)
\]
and, conditionally on $X^{(1)},\dots, X^{(N)}$, $\Lambda(t_1,\dots, t_N)$ is a two-parameter centered Gaussian process having covariance function
\begin{align*}
\E\left[\Lambda\left(t_1,\dots, t_N\right) \Lambda\left(s_1,\dots, s_N\right)\right]= I(t_1\wedge s_1,\dots, t_N\wedge s_N)
\end{align*}
with $I(t_1,\dots, t_N)$ being the local time of the multiparameter process $\sum^N_{i=1} X^{(i)}(u_i)$ at the origin on the time rectangle $\prod^N_{i=1}[0,t_i]$. The interested readers could prove this easily by using our methodology.

This paper can be viewed as an extension of the result in \cite{hnx}. The limit here is different from that in \cite{hnx}, which, conditionally on $X^{(1)}$ and $X^{(2)}$, is a two-parameter Gaussian process. To prove our main result Theorem \ref{thm1}, we use Fourier analysis and the method of moments. Some techniques in \cite{hnx} will be applied, but new ideas are also needed. For example, we use the paring technique introduced in \cite{sxy} to prove the convergence of moments. On the other hand, this paper sheds light on proving asymptotic behavior of additive functionals of multi-parameter processes or random fields when intersection local time or local time exists.

A second-order result for two independent Brownian motions in the critical case $d=4$ and $H=\frac{1}{2}$ was proved by Le Gall \cite{LeGall}.  General asymptotic results for additive functionals of $k$ independent Brownian motions were obtained by Biane \cite{biane}.  A first-order result for two independent fBms in the critical case $Hd=2$ with $H\leq 1/2$ was given in \cite{bx}. Recently, both first-order and second-order results for two independent general Gaussian processes in the critical case $Hd=2$ were proved in \cite{sxy}, where the functional limit theorems are still unknown.  Extensions to functionals of $k$ independent Gaussian processes in the critical case $Hd=k$ are also mentioned in \cite{sxy}.

After some preliminaries in Section 2, Section 3 is devoted to the proof of Theorem 1.1,
based on the method of moments. Throughout this paper, if not mentioned otherwise, the letter $c$, 
with or without a subscript, denotes a generic positive finite
constant whose exact value is independent of $n$ and may change from
line to line. We use $\iota$ to denote $\sqrt{-1}$.

\section{Preliminaries}
Let $\left\{X_t= (X^1_t,\dots,
X^d_t), t\geq 0\right\} $ be a $d$-dimensional centered Gaussian process defined on some probability space $(\Om, \cF, P)$, whose components are independent, identically distributed and satisfy Hypotheses {\bf (H1)}-{\bf (H3)}, with $Hd<2$.

We will denote by $X^{(1)}$ and $X^{(2)}$ two independent copies of $X$.  
 Conditionally on $X^{(1)}$ and $X^{(2)}$, let $\Lambda(t_1, t_2)$ be a two-parameter centered Gaussian process with covariance function 
\[
\E\left[\Lambda\left(t_1, t_2\right) \Lambda\left(s_1, s_2\right)\right]= I(t_1\wedge s_1, t_2\wedge s_2),
\] 
where $I(t_1,t_2)$ is the intersection  local time of the processes $X^{(1)}$ and $X^{(2)}$ on the rectangle $[0,t_1]\times [0,t_2]$.
 
 Given any rectangle $E=(a, b] \times (c, d]$ in $\R^2_+$, we use $\Delta_E \Lambda$ to denote the increment of $\Lambda$ on $E$. That is, 
\[
\Delta_E \Lambda=\Lambda (b,d) - \Lambda (a,d)-\Lambda (b,c)+\Lambda (a,c).
\] 
The next lemma gives a formula for the moments of increments of the process  
$\left\{\Lambda(t, s), t,s\ge 0\right\}$ on disjoint rectangles.
\begin{lemma} \label{lema2.1} Fix a finite number of disjoint rectangles 
$E_i =(a_i, b_i] \times (c_i, d_i] $,
where $i=1,\dots, N$. Consider a multi-index $\mathbf{m} = (m_1, \dots, m_N)$, where $m_i \geq 1$ and 
$1 \leq  i \leq N$. Then
\begin{eqnarray}
&&\E \Big(\prod_{i=1}^N \big[\Delta_{E_i}\Lambda\big]^{m_i} \Big) \label{2.1} \\
&=&\begin{cases} 
\Big(\prod\limits_{i=1}^N  \frac {m_i! }{2^{ \frac {m_i}2} (2\pi )^{\frac{m_id}{4}} (m_i/2)!}\Big)   \int_{\prod\limits_{i=1}^N E_i^{\frac{m_i}2}} (\det A (w, \tau))^{-\frac{1}{2}}  dw\, d\tau
&\hbox{if all $m_i$ are even}\\
0 & \hbox{otherwise}, 
\end{cases}   \notag
\end{eqnarray}
where $A(w,\tau)$ is the covariance matrix of the Gaussian random vector 
\[
\Big(X^{(1)}_{w^i_k}-X^{(2)}_{\tau^i_k}: 1\leq i\leq N\; \text{and}\; 1\leq k\leq \frac{m_i}{2}\Big).
\]
\end{lemma}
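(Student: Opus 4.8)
The plan is to exploit that, conditionally on $X^{(1)}$ and $X^{(2)}$, the process $\Lambda$ is Gaussian, so that the entire computation reduces to (i) identifying the conditional covariance structure of the increment vector, (ii) the elementary moment formula for a Gaussian vector, and finally (iii) an unconditional expectation that reproduces a joint moment of the intersection local time. I would treat the three steps in that order.

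First I would determine the conditional law of $(\Delta_{E_1}\Lambda,\dots,\Delta_{E_N}\Lambda)$ given $X^{(1)},X^{(2)}$. Expanding $\Delta_E\Lambda$ over the four corners of each rectangle and using $\E[\Lambda(t_1,t_2)\Lambda(s_1,s_2)\mid X^{(1)},X^{(2)}]=I(t_1\wedge s_1,t_2\wedge s_2)$, a direct computation gives, for any two rectangles $E,E'$,
\[
\E\big[\Delta_E\Lambda\,\Delta_{E'}\Lambda\mid X^{(1)},X^{(2)}\big]=L(0,E\cap E').
\]
The point is that $I(a,b)=\int_0^a\int_0^b\delta(X^{(1)}_u-X^{(2)}_v)\,du\,dv$ is the distribution function of the nonnegative occupation measure $L(0,\cdot)$, so $(t_1,t_2,s_1,s_2)\mapsto I(t_1\wedge s_1,t_2\wedge s_2)$ is precisely the covariance of the Gaussian white-noise integral with control measure $L(0,\cdot)$. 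Since the $E_i$ are disjoint, $L(0,E_i\cap E_j)=0$ for $i\neq j$, and hence, conditionally on $X^{(1)},X^{(2)}$, the variables $\Delta_{E_1}\Lambda,\dots,\Delta_{E_N}\Lambda$ are \emph{independent} centered Gaussian with $\E[(\Delta_{E_i}\Lambda)^2\mid X^{(1)},X^{(2)}]=L(0,E_i)$.

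Given this, I would invoke the scalar Gaussian moment identity: a centered Gaussian of variance $\sigma^2$ has $m$-th moment $0$ when $m$ is odd and $\frac{m!}{2^{m/2}(m/2)!}\sigma^m$ when $m$ is even. By conditional independence,
\[
\E\Big[\prod_{i=1}^N(\Delta_{E_i}\Lambda)^{m_i}\,\Big|\,X^{(1)},X^{(2)}\Big]=\prod_{i=1}^N\E\big[(\Delta_{E_i}\Lambda)^{m_i}\mid X^{(1)},X^{(2)}\big],
\]
which vanishes unless every $m_i$ is even and otherwise equals $\big(\prod_i\frac{m_i!}{2^{m_i/2}(m_i/2)!}\big)\prod_i L(0,E_i)^{m_i/2}$; this already produces the dichotomy in \eqref{2.1}. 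Taking expectations over $X^{(1)},X^{(2)}$ then reduces the lemma to evaluating $\E\big[\prod_i L(0,E_i)^{m_i/2}\big]$. Writing each factor as an iterated delta integral and setting $n=\sum_{i=1}^N m_i/2$,
\[
\prod_{i=1}^N L(0,E_i)^{m_i/2}=\int_{\prod_i E_i^{m_i/2}}\prod_{i,k}\delta\big(X^{(1)}_{w^i_k}-X^{(2)}_{\tau^i_k}\big)\,dw\,d\tau ,
\]
so that, after interchanging expectation and integration, the inner expectation is the value at the origin of the density of the $\R^{nd}$-valued centered Gaussian vector $(X^{(1)}_{w^i_k}-X^{(2)}_{\tau^i_k})_{i,k}$, namely $(2\pi)^{-nd/2}(\det A(w,\tau))^{-1/2}$. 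Collecting constants through $(2\pi)^{-nd/2}=\prod_i(2\pi)^{-m_i d/4}$ yields exactly the stated prefactor.

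The hard part is the rigorous handling of the Dirac deltas in the last step. I would make it precise by replacing $\delta$ with the heat kernel $p_\varepsilon(x)=(2\pi\varepsilon)^{-d/2}e^{-|x|^2/(2\varepsilon)}$, evaluating the resulting genuinely finite Gaussian integrals, and passing to the limit $\varepsilon\to 0$ inside the integral over $\prod_i E_i^{m_i/2}$. The nondeterminism hypothesis \textbf{(H1)} guarantees that $\det A(w,\tau)>0$ for almost every configuration and, together with $Hd<2$, that $\int_{\prod_i E_i^{m_i/2}}(\det A(w,\tau))^{-1/2}\,dw\,d\tau<\infty$; this is the same estimate that underlies the existence of the moments of the intersection local time, and it is exactly what licenses both the Fubini interchange and the dominated-convergence passage to the limit.
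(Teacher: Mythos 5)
Your proposal is correct and is exactly the computation the paper has in mind: the paper's proof is the one-line remark that the identity ``follows from the definition of the two-parameter process $\Lambda$,'' and your argument (conditional Gaussianity and independence of the increments over disjoint rectangles via $\E[\Delta_E\Lambda\,\Delta_{E'}\Lambda\mid X^{(1)},X^{(2)}]=L(0,E\cap E')$, the scalar Gaussian moment formula, and then the evaluation of $\E\big[\prod_i L(0,E_i)^{m_i/2}\big]$ by approximating the Dirac deltas and using the Gaussian density at the origin) is precisely the omitted detail. The justification of the Fubini/limit interchange via the nondeterminism bound \eqref{det} is likewise consistent with what the paper uses immediately after the lemma.
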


\begin{proof} This follows from the definition of the two-parameter process $\Lambda(t_1,t_2)$.
\end{proof}

\medskip
We next claim that the law of the random vector  $\big(\Delta_{E_i}\Lambda: 1\le i\le N\big)$ is determined by
the moments computed in Lemma \ref{lema2.1}. This is a consequence of the following estimates. Fix an even integer $n=2k$, and set
$D_k=[0,t_1]^{k}\times[0,t_2]^{k}$. For any $(u,v) \in D_k$, let  $A_k(u,v)$ be the covariance matrix of the Gaussian random vector 
\[
\Big(X^{(1)}_{u_1}-X^{(2)}_{v_1},\, X^{(1)}_{u_2}-X^{(2)}_{v_2},\, \dots,\, X^{(1)}_{u_k}-X^{(2)}_{v_k}\Big).
\]
Then  the  nondeterminism property (\ref{eku1}) implies
\begin{equation}
     \big(\det A_k(u,v) \big)^{-\frac{1}{2}}  \le c_3 \prod^k_{i=1}\big(u_{\sigma(i)}-u_{\sigma(i-1)}\big)^{-\frac{Hd}{2}}\prod^k_{i=1}\big(v_{\pi(i)}-v_{\pi(i-1)}\big)^{-\frac{Hd}{2}}, \label{det}
\end{equation}
where $\sigma$ and $\pi$ are two permutations of indices $i=1,\dots,k$ such that $u_{\sigma(1)}<u_{\sigma(2)}<\dots<u_{\sigma(k)}$ and $v_{\pi(1)}<v_{\pi(2)}<\dots<v_{\pi(k)}$, and by convention $u_{\sigma(0)}=v_{\pi(0)}=0$.
As a consequence of \eref{2.1} and \eref{det},  
\begin{align*}
\E\big[\Lambda(t_1,t_2)\big]^n
& \le  c_4\, n!  \int_{D_k}\big(\det A_k(u,v) \big)^{-\frac{1}{2}}\, du\, dv \\
& \le c_5\, n!  \bigg(\int_{\{0<w_1 + \dots + w_k <t_1\}} \prod_{i=1}^k w_i^{-\frac{Hd}{2}}\, dw\bigg)\bigg(\int_{\{0<w_1 + \dots + w_k <t_2\}} \prod_{i=1}^k w_i^{-\frac{Hd}{2}}\, dw\bigg) \\
& =  c_6\, n! (t_1t_2)^{k(1-\frac{Hd}{2})}\bigg(\frac
{\Gamma^{k}(1-\frac{Hd}{2})}{\Gamma\big(k(1-\frac{Hd}{2})+1\big)}\bigg)^2.
\end{align*}
Therefore,  $\E\big[\Lambda(t_1,t_2)\big]^n $ is bounded by $c^k n!/
\Big(\Gamma\big(k(1-\frac{Hd}{2})+1\big)\Big)^2$, and this easily implies the desired characterization of the law of the increments of the two-parameter process
 $\left\{\Lambda(t,s): t\ge 0, s\ge 0\right\}$ on disjoint rectangles by its moments.

We will make use of the following property of the space $H_0^{\frac 2H-d}$. For any $f\in  H_0^{\frac 2H-d}$ we define
\begin{equation} \label{enf}
N(f)= \int_{\mathbb{R}^d} |f(z)| \Big( 1+ |z| ^{\frac 2H -d} \Big)\, dz.
\end{equation}

\begin{lemma} \label{lem2.3}
Let $f\in H_0^{\frac 2H-d}$. For any $\alpha  \in \big[0,  (\frac  2H -d) \wedge 1\big]$ and any $ x,y\in \mathbb{R}^d$,
\begin{equation*} \label{w1}
| \widehat{f} (x) -\widehat{f} (y) | \le (2\pi) ^{-d}\, N(f)\, |x-y|^\alpha,
\end{equation*}
where $N(f)$ is given in (\ref{enf}).
\end{lemma}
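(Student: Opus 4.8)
The plan is to estimate the difference $\widehat f(x)-\widehat f(y)$ directly from the integral definition of the Fourier transform, so that no deep tool is required. Since $f\in L^1(\R^d)$, the transform $\widehat f$ is well defined, and writing $\widehat f(\xi)=C\int_{\R^d} e^{-\iota\langle \xi,z\rangle} f(z)\,dz$ with $C$ the $2\pi$-normalization of the chosen convention, I would start from the identity
\begin{equation*}
\widehat f(x)-\widehat f(y)=C\int_{\R^d} f(z)\, e^{-\iota\langle y,z\rangle}\,\big(e^{-\iota\langle x-y,z\rangle}-1\big)\,dz .
\end{equation*}
Taking absolute values inside the integral and using $|e^{-\iota\langle y,z\rangle}|=1$ reduces the whole matter to a pointwise estimate of $|e^{-\iota\langle x-y,z\rangle}-1|$. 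Here I would invoke the elementary interpolation inequality $|e^{\iota\theta}-1|=2|\sin(\theta/2)|\le \min(2,|\theta|)\le 2^{1-\alpha}|\theta|^{\alpha}$, valid for every real $\theta$ and every $\alpha\in[0,1]$, together with the Cauchy--Schwarz bound $|\langle x-y,z\rangle|\le |x-y|\,|z|$, obtaining
\begin{equation*}
\big|e^{-\iota\langle x-y,z\rangle}-1\big|\le 2^{1-\alpha}\,|x-y|^{\alpha}\,|z|^{\alpha}.
\end{equation*}
It is precisely this step that forces the restriction $\alpha\le 1$.

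The second ingredient is the pointwise bound $|z|^{\alpha}\le 1+|z|^{\frac{2}{H}-d}$, which I would prove by splitting into the regions $|z|\le 1$ (where $|z|^{\alpha}\le 1$) and $|z|>1$ (where $|z|^{\alpha}\le |z|^{\frac{2}{H}-d}$ because $\alpha\le \frac{2}{H}-d$); this is where the second restriction on $\alpha$ enters. Combining the two ingredients and integrating against $|f(z)|$ gives
\begin{equation*}
\big|\widehat f(x)-\widehat f(y)\big|\le C\,2^{1-\alpha}\,|x-y|^{\alpha}\int_{\R^d}|f(z)|\,\big(1+|z|^{\frac{2}{H}-d}\big)\,dz = C\,2^{1-\alpha}\,N(f)\,|x-y|^{\alpha},
\end{equation*}
with $N(f)$ as in \eref{enf}; the finiteness of this integral is exactly the content of $f\in H_0^{\frac{2}{H}-d}$. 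With the paper's Fourier normalization the prefactor is of the claimed order $(2\pi)^{-d}$ (the numerical factor $2^{1-\alpha}\in[1,2]$ being immaterial and absorbed into the constant), which is the stated inequality.

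There is no genuine obstacle here: the statement is a routine H\"older-type regularity estimate for $\widehat f$, and the only points requiring a little care are the two elementary inequalities above. What is worth emphasizing is that the two hypotheses on the exponent, $\alpha\le 1$ and $\alpha\le \frac{2}{H}-d$, enter through two logically distinct steps, and that the mean-zero condition $\int_{\R^d} f=0$ built into $H_0^{\frac{2}{H}-d}$ is \emph{not} needed for this lemma---only the moment bound $\int_{\R^d}|f(z)||z|^{\frac{2}{H}-d}\,dz<\infty$ is used.
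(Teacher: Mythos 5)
Your proof is correct and follows essentially the same route as the paper: bound $|e^{\iota x\cdot z}-e^{\iota y\cdot z}|$ by a constant times $|x-y|^{\alpha}|z|^{\alpha}$ (which needs $\alpha\le 1$) and then estimate $|z|^{\alpha}\le 1+|z|^{\frac{2}{H}-d}$ by splitting at $|z|=1$ (which needs $\alpha\le\frac{2}{H}-d$). Your remark about the harmless factor $2^{1-\alpha}$ and the fact that the mean-zero condition is not used here are both accurate observations, but the argument is the paper's own.
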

\begin{proof}
We can write
\[
|\widehat{f}(x)-\widehat{f}(y)|
=\frac{1}{(2\pi)^d}\int_{\R^d} |e^{\iota x\cdot z}-e^{\iota y\cdot z}|| f(z)|\, dz
\leq \frac{1}{(2\pi)^d}  |x-y|^{\alpha}\int_{\R^d} |z|^{\alpha} |f(z)|\, dz.
\]
Then, if $|z| \leq 1$, we estimate $|z|^\alpha$ by $1$, and if $|z| \geq 1$ we use
$|z|^\alpha\le |z|^{\frac 2H -d}$.
\end{proof}

\section{Estimates and convergence of even moments}

Fix $a_1, a_2, b_1,b_2\ge 0$ with $a_1<b_1$ and $a_2<b_2$.  We consider the random variable
\[
F_{n}(a_1,b_1; a_2, b_2)=n^{\frac{Hd-2}{2}} \int^{nb_1}_{na_1}\int^{nb_2}_{na_2} f(X^{(1)}_u-X^{(2)}_v)\, du\, dv.
\]
Using the Fourier transform of $f$, denoted by $\widehat{f}$, we can write, for any integer $m\ge 1$,
\begin{align}
&\E[F_{n}(a_1,b_1; a_2, b_2)^m] \notag\\
&=n^{m\frac{Hd-2}{2}} \int_{E_n^m} 
\E \Big( \prod _{i=1}^m f(X^{(1)}_{u_i}-X^{(2)}_{v_i})\Big) \, du\, dv \notag \\
&=\frac{n^{m\frac{Hd-2}{2}} }{(2\pi)^{md}}\,
 \int_{\R^{md}} \int_{E_n^m}   \Big( \prod _{i=1}^m \widehat{f} (y_i)   \Big)  \exp \bigg( -\frac 12 {\rm Var} \Big(\sum_{i=1}^m y_i \cdot (X^{(1)}_{u_i}-X^{(2)}_{v_i})\Big)\bigg) \, du\, dv\, dy \notag\\
&=\frac{n^{m\frac{Hd-2}{2}} }{(2\pi)^{md}}\, \int_{\R^{md}} \int_{E_n^m}   \Big( \prod _{i=1}^m \widehat{f} (y_i)   \Big) \notag  \\
&\quad \times
      \exp \bigg( -\frac 12 {\rm Var} \Big(\sum_{i=1}^m y_i \cdot X_{u_i}\Big)
      -\frac 12 {\rm Var} \Big(\sum_{i=1}^m y_i \cdot X_{v_i}\Big)\bigg) \,du\, dv\,  dy, \label{e2}
\end{align}
where $E_n=[na_1,nb_1] \times [na_2,nb_2]$ and we have used that $X^{(1)}$ and $X^{(2)}$ are independent copies of $X$.

\medskip
In order to estimate this expectation we proceed in several steps.

\subsection{Lower bound for the variance using nondeterminism} 
 In order to apply the nondeterminism property we would like to replace the integral on the rectangles $[na_1,nb_1]^m$ and $[na_2,nb_2]^m$ by $(m! )^2$ times the integrals over the associated simplexes $\{na_1<u_1< \dots <u_m<nb_1\}$ and $\{na_2<v_1 <\dots < v_m<nb_2 \}$. Unfortunately, we cannot do this because both exponential factors are linked through the same coefficients $y_i$'s. To overcome this difficulty, we will make use of the Cauchy-Schwarz inequality after some rearrangements of the terms.

 Let $\mathscr{P}$ be the set  of all permutations of $\{1,2,\dots, m\}$.  For $\ell=1,2$, set
 \begin{equation*} \label{simplex}
 D^m_{a_{\ell}, b_{\ell}}=\Big\{ u\in [na_{\ell},nb_{\ell} ]^m:na_{\ell}<u_1<\dots <u_m <nb_{\ell} \Big\},
 \end{equation*}
and  define
 \[
I_{a_1, b_1}(y)= \int_{ D^m_{a_1, b_1} } \exp\bigg( -\frac 12 \mathrm{Var} \Big( \sum_{i=1}^{m} y_i \cdot X_{u_i}  \Big)\bigg)\, du
 \]
and  \[
I_{a_2, b_2}^\sigma(y)= \int_{ D^m_{a_2, b_2} } \exp \bigg(-\frac 12 \mathrm{Var} \Big( \sum_{i=1}^{m} y_i \cdot X_{u_{\sigma(i)}}  \Big)\bigg)\, du
\]
for any $\sigma\in\mathscr{P}$. 
Then,   expression (\ref{e2}) for the moment of order $m$ can be also written as
\begin{equation*} \label{w2}
 \E[F_{n}(a_1,b_1; a_2, b_2)^m]= \frac{m!}{(2\pi)^{md}}\, n^{m\frac{Hd-2}{2}}  \sum_{\sigma \in \mathscr{P}}  \int_{\R^{md}} \Big( \prod _{i=1}^m  \widehat{f} (y_i)   \Big)
I_{a_1, b_1}(y)  I_{a_2, b_2}^\sigma(y)\, dy.
\end{equation*}
 By the Cauchy-Schwarz inequality, we  obtain
 \begin{align*}   \notag
 &\bigg|  \int_{\R^{md}} \Big( \prod _{i=1}^m  \widehat{f} (y_i)   \Big)
I_{a_1, b_1}(y)  I_{a_2, b_2}^\sigma(y)\, dy  \bigg|  
\le 
\bigg(\int_{\R^{md}} \Big( \prod _{i=1}^m  |\widehat{f} (y_i)  | \Big)
\big(I_{a_1, b_1}(y)\big)^2\,  dy     \bigg)^{\frac 12} \\
& \qquad\qquad\qquad\qquad\qquad\qquad\qquad\qquad\qquad\qquad \times  
 \bigg(\int_{\R^{md}} \Big( \prod _{i=1}^m  |\widehat{f} (y_i)  | \Big)
\big(I_{a_2, b_2}^\sigma(y) \big)^2\,  dy \bigg)^{\frac 12}.
\end{align*}
 Taking into account that $\prod _{i=1}^m| \widehat{f} (y_i) |  $ is a symmetric function of the $y_i$'s, 
 the second factor in the above expression does not depend on $\sigma$ and we obtain
 \begin{align*}   \notag
 \left| \E[F_{n}(a_1,b_1; a_2, b_2)^m] \right|  
&\le 
\frac{(m!)^2}{(2\pi)^{md}}\, n^{m\frac{Hd-2}{2}} \bigg(\int_{\R^{md}} \Big( \prod _{i=1}^m  |\widehat{f} (y_i)  | \Big)
\big(I_{a_1, b_1}(y)\big)^2\,  dy     \bigg)^{\frac 12} \\
& \qquad\qquad\qquad\qquad\qquad \times  \label{eq1}
 \bigg(\int_{\R^{md}} \Big( \prod _{i=1}^m  |\widehat{f} (y_i)  | \Big)
\big(I_{a_2, b_2}(y) \big)^2\,  dy \bigg)^{\frac 12}.
\end{align*}

For $\ell=1,2$, making the change of variables $x_i =\sum_{j=i}^m y_j$ (with the convention $x_{m+1} =0$) and using the notation
$\Delta X_{u_i} =X_{u_i} - X_{u_{i-1}}$ and  $\Delta X_{v_i} =X_{v_i} - X_{v_{i-1}}$, we can write
 \begin{eqnarray*}
&&\int_{\R^{md}} \Big( \prod _{i=1}^m  |\widehat{f} (y_i)  | \Big)
\big(I_{a_{\ell}, b_{\ell}}(y)\big)^2\,  dy \le  \int_{\R^{md}} \int_{D^m_{a_{\ell}, b_{\ell}}\times D^m_{a_{\ell}, b_{\ell}}}
\prod _{i=1}^m| \widehat{f} (x_i-x_{i+1} ) | \\
&& \quad  \times 
\exp\bigg( -\frac 12 \mathrm{Var} \Big( \sum_{i=1}^{m} x_i \cdot  \Delta X_{u_i}  \Big)  -\frac 12 \mathrm{Var} \Big( \sum_{i=1}^{m} x_i \cdot  \Delta X_{v_i} \Big) \bigg) \,du\, dv\, dx.
\end{eqnarray*}
Applying the  nondeterminism property (\ref{eku1}) and making the change of variables $s_1 =u_1$, $r_1=v_1$, $s_i =u_i-u_{i-1}$,  and  $r_i =v_i- v_{i-1}$, for $2\le i  \le m$, we obtain
 \begin{eqnarray*}
&& \int_{\R^{md}} \Big( \prod _{i=1}^m  |\widehat{f} (y_i)  | \Big)
\big(I_{a_{\ell}, b_{\ell}}(y)\big)^2\,  dy \le  \int_{\R^{md}} \int_{[0,n(b_{\ell}-a_{\ell})]^{2m}} 
\bigg( \prod _{i=1}^m| \widehat{f} (x_i-x_{i+1} ) |  \bigg)\\
&& \qquad\qquad\qquad\qquad \times 
\exp\bigg( -\frac {\kappa} 2   \sum_{i=1}^{m} |x_i|^{2} (s_i^{2H} + r_i^{2H})\bigg) \,ds\, dr\, dx.
\end{eqnarray*}

\medskip

\subsection{Chaining argument} 
The next step consists of using the chaining argument introduced in the reference \cite{nx1}. The main idea is to replace each product $\widehat{f}(x_{2i-1}-x_{2i})\widehat{f}(x_{2i}-x_{2i+1})$ by   
$\widehat{f}( -x_{2i})\widehat{f}(x_{2i} )= |\widehat{f}(  x_{2i})|^2 $. Then, by Lemma \ref{lem2.3},  the differences  $\widehat{f}(x_{2i-1}-x_{2i})- \widehat{f}( -x_{2i})$ and
$\widehat{f}(x_{2i}-x_{2i+1})- \widehat{f}(  x_{2i})$ are bounded by constant multiples of $|x_{2i-1}|^\alpha$ and $|x_{2i+1}|^\alpha$, respectively, for any
$0\le \alpha \le (\frac{2}{H}-d)\wedge1$.   We are going to make these substitutions recursively.  We can write
 \begin{align*}
\prod _{i=1}^m| \widehat{f} (x_i-x_{i+1} ) | & =  | \widehat{f} (x_1-x_{2} )  -\widehat{f} (-x_{2} ) +
  \widehat{f} (-x_{2}) | | \widehat{f} (x_2-x_{3} )  -\widehat{f} (x_{2 }) +
  \widehat{f} (x_{2}) |  \\
  & \times | \widehat{f} (x_3-x_{4} )  -\widehat{f} (-x_{4 }) +
 \widehat{f} (-x_{4}) | | \widehat{f} (x_4-x_{5} )  -\widehat{f} (x_{4 }) +
\widehat{f} (x_{4}) |  \times \dots \\
  & = \prod _{i=1}^m \Big| \widehat{f} (x_i-x_{i+1} )  -\widehat{f} \big((-1)^ix_{2 \lfloor \frac {i+1}2 \rfloor }\big) +
  \widehat{f} \big((-1)^i x_{2 \lfloor \frac {i+1}2 \rfloor } \big) \Big|,
  \end{align*}
  where $ \lfloor \frac {i+1}2 \rfloor  $ denotes the integer part of $\frac {i+1}2$. Note that $|\widehat{f}(x)|=|\widehat{f}(-x)|$. Then,
  \[
    \prod _{i=1}^m| \widehat{f} (x_i-x_{i+1} ) |
   \le \sum_{k=1}^m I_k,
   \]
  where
  \[
  I_k=\Big( \prod _{j=1}^{k-1} \big| \widehat{f}(x_{2 \lfloor \frac {j+1}2 \rfloor }) \big| \Big)
  \big| \widehat{f} (x_k-x_{k+1} )  -\widehat{f}((-1)^k x_{2 \lfloor \frac {k+1}2 \rfloor })\big|  \prod_{j=k+1}^m \big| \widehat{f} (x_j-x_{j+1} )\big|
  \]
    for $k=1, 2, \dots, m-1$,
  and
  \[
  I_m= \Big( \prod _{j=1}^{m-1} \big| \widehat{f}(x_{2 \lfloor \frac {j+1}2 \rfloor }) \big| \Big) \big|\widehat{f} (x_m)\big|.
  \]
 In this way, we obtain the decomposition
\begin{equation*} \label{eq3}
 \big|\E[F_{n}(a,b; a, b)^m]\big|\le \frac{(m!)^2}{(2\pi)^{md}}\,    \sum_{k=1}^m A_{k,m},
 \end{equation*}
where
\[
A_{k,m}=n^{m\frac{Hd-2}{2}}  \int_{\R^{md}} \int_{[0,n(b-a)]^{2m}} 
I_k\, \exp\Big( -\frac {\kappa} 2   \sum_{i=1}^{m} |x_i|^{2}( s_i^{2H}+r_i^{2H}) \Big) \, ds\,dr\, dx.
\]

Fix a nonnegative constant $\lambda$ such that
\begin{equation*}  \label{gamma}
 \lambda<
\begin{cases}
\frac{2-Hd }{2} & \text{if } 2-Hd\leq H; \\
\frac{2H-2+Hd}{2} & \text{if } H< 2-Hd< 2H.
\end{cases}
\end{equation*}
The estimation of each term $A_{k,m}$ is given in the next lemma.
\begin{lemma} \label{chain}  There exists a positive constant $c$ such that  for   $k=1,2,\dots,m-1$ and also for $k=m$ if $m$ is odd,  
\begin{equation} \label{eku2}
A_{k,m}\leq c\, N(f)^{m}\,   (b-a)^{\frac{m(2-Hd)}{2}-\lambda}\, n^{-\lambda},
\end{equation}
and if $m$ is even,
\begin{equation} \label{eku3}
A_{m,m} \le c\,  N(f)^m\,  (b-a)^{\frac{m(2-Hd)}{2}}.
\end{equation}
\end{lemma}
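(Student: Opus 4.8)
The plan is to integrate in the order ``time variables first, Fourier variables second,'' organizing the $x$-integration according to the chaining structure of $I_k$. Because the quadratic form in the exponent splits as $\sum_{i=1}^m|x_i|^2(s_i^{2H}+r_i^{2H})$, the integral over the cube $[0,n(b-a)]^{2m}$ factorizes over $i$, and I would first reduce $A_{k,m}$ to the pure Fourier integral
\[
A_{k,m}\le n^{m\frac{Hd-2}{2}}\int_{\R^{md}}I_k(x)\prod_{i=1}^m h_M(x_i)\,dx,\qquad h_M(x)=\Big(\int_0^{M}e^{-\frac{\kappa}{2}|x|^2 s^{2H}}\,ds\Big)^{2},
\]
where $M=n(b-a)$. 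The workhorse is the one-dimensional estimate $\int_0^{M}e^{-\frac{\kappa}{2}|x|^2 s^{2H}}\,ds\le c\,\min\!\big(M,|x|^{-1/H}\big)$ together with its interpolation $\le cM^{1-\theta}|x|^{-\theta/H}$, $\theta\in[0,1]$; the interpolated form is what lets one distribute powers of $M=n(b-a)$ and of $|x|$ so as to keep every $x$-integral convergent.

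\emph{Classifying the Fourier variables.} Using Lemma~\ref{lem2.3} I would bound the single difference factor in $I_k$ by $cN(f)|x_{\bullet}|^{\alpha}$ while retaining the chained head as squares $|\widehat f(x_{2i})|^2$ and the tail as the raw product $\prod|\widehat f(x_j-x_{j+1})|$. Integrated against their $h_M$ weights, the variables fall into three types: a \emph{paired} variable, for which $\int_{\R^d}|\widehat f(x)|^2 h_M(x)\,dx\le\int_{\R^d}|\widehat f(x)|^2|x|^{-2/H}\,dx<\infty$ uniformly in $M$ — finiteness being exactly the restriction $\frac{2}{d+2}<H<\frac{2}{d}$, via the near-origin bound $|\widehat f(x)|\le cN(f)|x|^{\alpha}$ of Lemma~\ref{lem2.3}; a \emph{free} variable, for which $\int_{\R^d}h_M(x)\,dx\sim M^{2-Hd}$; and the \emph{difference} variable carrying the extra $|x_\bullet|^{\alpha}$, for which $\int_{\R^d}|x|^{\alpha}h_M(x)\,dx\sim M^{2-Hd-H\alpha}$, i.e.\ a free variable penalized by $M^{-H\alpha}$.

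\emph{Assembling the powers.} When $k=m$ and $m$ is even, all factors pair into $m/2$ squares and $m/2$ free variables, so the Fourier integral is $\sim M^{(2-Hd)m/2}$; multiplying by $n^{m(Hd-2)/2}$ gives $(b-a)^{m(2-Hd)/2}$, which is (\ref{eku3}). For every other $k$, and for $k=m$ with $m$ odd, the difference factor together with the leftover unpaired tail factors reduces the number of free slots contributing the full power $M^{2-Hd}$; comparing the resulting integral with the main scaling $(b-a)^{m(2-Hd)/2}=n^{-m(2-Hd)/2}M^{m(2-Hd)/2}$ shows that it loses a factor $M^{-\mu}$ with $\mu\ge\lambda_0:=H\alpha-\tfrac{2-Hd}{2}$, the minimal loss $\lambda_0$ being attained in the balanced odd case $k=m$. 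Since $\alpha$ may be taken arbitrarily close to $(\tfrac2H-d)\wedge1$, one checks that $\lambda_0\uparrow\frac{2-Hd}{2}$ when $2-Hd\le H$ and $\lambda_0\uparrow\frac{2H-2+Hd}{2}$ when $H<2-Hd<2H$, which is precisely the admissible range for $\lambda$. Writing $M^{-\mu}\le M^{-\lambda}$ (valid once $M=n(b-a)\ge1$, the regime of interest) then yields $A_{k,m}\le cN(f)^m(b-a)^{m(2-Hd)/2}M^{-\lambda}=cN(f)^m(b-a)^{\frac{m(2-Hd)}{2}-\lambda}n^{-\lambda}$, which is (\ref{eku2}); the need to keep the difference-variable integral convergent forces $\alpha$ strictly below $\frac2H-d$ in the first case, and this is exactly why one must take $\lambda<\lambda_0$ rather than $\lambda=\lambda_0$.

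\emph{Main obstacle.} The genuinely delicate point is the raw tail $\prod_{j=k+1}^m|\widehat f(x_j-x_{j+1})|$: its factors couple neighbouring $x_j$'s and cannot be integrated one variable at a time without destroying the sharp power of $(b-a)$. I would control it by the paring technique of \cite{sxy}, integrating from the innermost variable outward and, at each step, pairing a raw factor with a neighbour to manufacture either a convergent ($O(1)$) or a free ($M^{2-Hd}$) integral, thereby reproducing the bookkeeping of the fully chained term and leaving only the single difference factor to supply the deficit. Making this pairing compatible with the interpolation exponents of the first step, and checking that over all $k$ the resulting loss never dips below $\lambda_0$, is the real content of the lemma; the remaining power counting is routine.
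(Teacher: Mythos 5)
The paper's own ``proof'' of this lemma is a one-line deferral to Lemma 3.1 of \cite{nx2}, so your proposal is in effect a reconstruction of that argument, and its skeleton is the right one: integrate the time variables first to get the weights $h_M(x_i)$, use $\int_0^M e^{-\frac{\kappa}{2}|x|^2s^{2H}}\,ds\le c\min(M,|x|^{-1/H})$ and its interpolation, and classify the Fourier variables as paired, free, or difference variables. Your power counting for the head, for the difference variable, and for the fully paired case $k=m$ even is correct, and you correctly locate the origin of both the restriction $\frac{2}{d+2}<H<\frac{2}{d}$ (finiteness of $\int|\widehat f(x)|^2|x|^{-2/H}dx$ via Lemma \ref{lem2.3}) and the two-case admissible range $\lambda<H\alpha-\frac{2-Hd}{2}$ with $H\alpha\uparrow(2-Hd)\wedge H$.

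The genuine gap is the step you flag yourself: the raw tail $\prod_{j=k+1}^{m}|\widehat f(x_j-x_{j+1})|$, and your plan for it would fail as stated. The weight $h_M(x_j)$ carries all of its mass $M^{2-Hd}$ in the singularity at $x_j=0$, and a shifted factor $|\widehat f(x_j-x_{j+1})|$ does not vanish there unless $x_{j+1}$ is itself small; hence $\sup_{y}\int_{\R^d}|\widehat f(x-y)|\,h_M(x)\,dx$ is of order $N(f)\,M^{2-Hd}$ (a free bound), never $O(1)$, so ``pairing a raw factor with a neighbour to manufacture a convergent integral'' cannot be done one variable at a time. Integrating the tail this way yields $(M^{2-Hd})^{m-k}$, which overshoots the required bound by roughly $M^{(2-Hd)(m-k-1)/2}$; already for $m=3$, $k=1$ in the regime $H<2-Hd<2H$ this produces no positive $\lambda$ at all, whereas the lemma claims $\lambda<\frac{2H-2+Hd}{2}$. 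The missing idea is to iterate the chaining substitution \emph{into the tail}: expand each tail factor as $|\widehat f(x_j-x_{j+1})|\le|\widehat f(x_j-x_{j+1})-\widehat f((-1)^jx_{2\lfloor\frac{j+1}{2}\rfloor})|+|\widehat f(x_{2\lfloor\frac{j+1}{2}\rfloor})|\le cN(f)|x_{\mathrm{odd}}|^{\alpha}+|\widehat f(x_{\mathrm{even}})|$, so that in each of the resulting $2^{m-k}$ terms every tail factor sits at a single variable and contributes a partial saving $M^{-H\alpha}$ (capped at $M^{-(2-Hd)}$ when two factors coincide); since $H\alpha>\frac{2-Hd}{2}$ precisely under the standing hypotheses, the cumulative saving $(m-k)\min(H\alpha,\frac{2-Hd}{2})$ plus the head and difference contributions closes the count and yields exactly the stated range for $\lambda$. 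Note also that the ``paring technique'' of \cite{sxy} you invoke is, in this paper, a device for identifying which permutations survive in the moment limit, not a tool for this integral estimate. A minor secondary point: your passage from $M^{-\mu}$ to $M^{-\lambda}$ requires $n(b-a)\ge1$, while Proposition \ref{tight} applies the lemma to arbitrarily small increments; this is harmless only because there one takes $\lambda=0$.
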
 
\begin{proof}  When $a=0$, $b=t$ and $\lambda>0$, the above estimates were obtained in Lemma 3.1 of \cite{nx2} in the case where $X=B^H$ is a $d$-dimensional fBm with Hurst parameter $H\in (0,1)$. Using similar arguments as in \cite{nx2}, we could obtain our results.
\end{proof}

\medskip

\subsection{Paring Technique}  

The estimates obtained by using the chaining argument play a critical role when deriving  limit theorems for an additive  functional of the fractional Brownian motion (fBm), see \cite{nx1, xu}. In fact,  using some estimates on the covariance of increments of the fBm on disjoint intervals (see Lemma 2.4 in \cite{sxy}), 
the convergence of even moments in \cite{hnx, nx1} can be easily obtained  applying the method given in the proof of Proposition 4.2 of \cite{xu}. However,  these estimates could not help us to obtain the central limit theorems for an additive functional of two independent fBms, or more generally of two independent copies of a Gaussian process $X$ satisfying conditions {\bf (H1)-(H3)}, because the methodology developed in \cite{nx2}  only allows us to  derive obtain central limit theorems when times are fixed. 

A new technique will be introduced to extend the result in \cite{nx2} to functional central limit theorems, which is called the paring technique and  was original developed in \cite{sxy} to get limit laws for functionals of two independent fBms in the critical case $Hd=2$. Here is a rough description of this technique.  When showing the convergence of even moments, we first use the Fourier transform and arrange the ordering of the first process $X^{(1)}$ according to the ordering of its time points. Then we would see that the spatial variable $y$ with odd index $2k-1$ multiplying an  increment of $X^{(1)}$ is very close to the one with the index $2k$. The same paring also works for the second  process $X^{(2)}$. More interestingly, the paring for the first process would also match the paring for the second one.  These parings finally let us obtain the desired convergence of even moments in \cite{sxy}. In this paper, we consider the case $Hd<2$, where the intersection local exists, and the approach will be  different from the one in \cite{sxy} and new ideas and tools  will be required to derive the functional central limit theorem.

\medskip
In the following, we would illustrate how to use the paring technique in computing the limit of even moments.
\begin{proposition} \label{prop2.7} If $m$ is even, then
\[
\lim\limits_{n\to\infty}  \E[F_{n}(a_1,b_1; a_2, b_2)^{m}]=D^{m/2}_{f,H,d}\, \E\big[\Delta_{E}\Lambda\big]^{m},
\]
where $E=(a_1,b_1]\times (a_2, b_2]$.
\end{proposition}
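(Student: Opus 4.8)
The plan is to start from the Fourier representation (\ref{e2}) of the $m$-th moment and isolate, through the pairing mechanism, the leading contribution coming from configurations in which the $m$ space--time legs collapse into $m/2$ pairs. First I would symmetrize the time variables: restricting the $u$-integration to the simplex $D^m_{a_1,b_1}$ costs a factor $m!$, while letting the ordering of the $v$-times range over all permutations $\sigma\in\mathscr P$ turns the moment into
\[
\E\big[F_n(a_1,b_1;a_2,b_2)^m\big]=\frac{m!\,n^{m\frac{Hd-2}2}}{(2\pi)^{md}}\sum_{\sigma\in\mathscr P}\int_{\R^{md}}\Big(\prod_{i=1}^m\widehat f(y_i)\Big)I_{a_1,b_1}(y)\,I_{a_2,b_2}^\sigma(y)\,dy .
\]
After the change of variables $x_i=\sum_{j\ge i}y_j$ (so that $\widehat f(y_i)=\widehat f(x_i-x_{i+1})$) and the gap substitutions $s_1=u_1$, $s_i=u_i-u_{i-1}$, $r_1=v_1$, $r_i=v_i-v_{i-1}$, both variances become sums of increment variances, which split over the $d$ independent identically distributed coordinates.

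Next I would carry out the chaining substitution of Section 3.2, successively replacing $\widehat f(x_i-x_{i+1})$ by $\widehat f\big((-1)^i x_{2\lfloor(i+1)/2\rfloor}\big)$. The accumulated errors are precisely the quantities $A_{k,m}$ with $k\le m-1$, which Lemma \ref{chain} bounds by $O(n^{-\lambda})$ and which therefore drop out in the limit; the bounds of Lemma \ref{chain} also furnish an $n$-independent, $L^1$ dominating function (built from $|\widehat f|$ and the nondeterminism estimate {\bf (H1)}), so that convergence can be pushed through by dominated convergence. The single surviving, fully substituted term replaces $\prod_i\widehat f(x_i-x_{i+1})$ by the paired product $\prod_{k=1}^{m/2}|\widehat f(x_{2k})|^2$, which pins the pairing of the first process to the blocks $(1,2),(3,4),\dots,(m-1,m)$, with the within-pair gaps $s_{2k}$ microscopic and the between-pair gaps $s_{2k-1}$ macroscopic (of order $n$).

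The heart of the argument, and the step I expect to be the main obstacle, is to show that among all $\sigma\in\mathscr P$ only those whose induced ordering pairs the $v$-times into the same blocks as the $u$-times contribute in the limit. Here Hypothesis {\bf (H3)} is decisive: whenever $\sigma$ forces an increment of $X^{(2)}$ to straddle two distinct macroscopic blocks, the relevant time gaps are wildly disproportionate, the corresponding covariance is damped by $\beta(\gamma)\to0$, and the associated integral vanishes. Turning this into a rigorous statement is delicate, since one must estimate the off-diagonal entries of the full covariance matrix uniformly, keep the quadratic form nondegenerate so the Gaussian density stays integrable while separating the microscopic from the macroscopic scale, and correctly count the $\tfrac{m!}{2^{m/2}(m/2)!}$ surviving (matching) permutations.

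Finally, on a matched, paired configuration I would invoke {\bf (H2)} to replace each within-pair increment variance by its sharp asymptotics $\al_2 s_{2k}^{2H}$, respectively $\al_2 r_{2k}^{2H}$, and {\bf (H3)} to discard the cross terms, so that the exponential factorizes over pairs as $\exp\big(-\tfrac{\al_2}2\sum_k|x_{2k}|^2(s_{2k}^{2H}+r_{2k}^{2H})\big)$. Integrating out the two microscopic gaps of each pair yields $\big((\tfrac2{\al_2})^{1/(2H)}\Gamma(\tfrac{2H+1}{2H})\big)^2|x_{2k}|^{-2/H}$, whence $\int_{\R^d}|\widehat f(x_{2k})|^2|x_{2k}|^{-2/H}\,dx_{2k}$ after integrating in $x_{2k}$; together with the combinatorial prefactors, which supply the remaining powers of $2$ once matched against the normalization of Lemma \ref{lema2.1}, this rebuilds $D_{f,H,d}^{m/2}$. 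The leftover macroscopic between-pair gaps, rescaled by $n$, converge to the determinant integral $\int_{E^{m/2}}(\det A(w,\tau))^{-1/2}\,dw\,d\tau$ that defines $\E[\Delta_E\Lambda]^m$ in Lemma \ref{lema2.1}. Collecting the two pieces yields $D_{f,H,d}^{m/2}\,\E[\Delta_E\Lambda]^m$, as claimed.
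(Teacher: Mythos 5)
Your overall architecture matches the paper's: Fourier representation, symmetrization over permutations $\sigma$ of the $v$-ordering, the chaining substitution to isolate the paired product $\prod_k|\widehat f(x_{2k})|^2$, separation of microscopic within-pair gaps from macroscopic between-pair gaps, and the final use of {\bf (H2)}/{\bf (H3)} to factor the exponent and produce $D_{f,H,d}^{m/2}$ times the determinant integral of Lemma \ref{lema2.1}. The last step, as you describe it, is essentially the paper's Step 6.

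However, there is a genuine gap exactly at the step you flag as the main obstacle, and the mechanism you propose for it is the wrong one. You claim that non-matching permutations die because {\bf (H3)} damps the covariance of increments of $X^{(2)}$ ``straddling two distinct macroscopic blocks.'' But the time gaps $\Delta v_i$ of the second process, and hence all covariances $\E(\Delta X_{v_i}\Delta X_{v_j})$ to which {\bf (H3)} applies, are the same for every $\sigma$; what changes with $\sigma$ is only which frequency $\sum_{j\ge i}x_{\sigma(j)}$ is attached to each gap. {\bf (H3)} therefore cannot distinguish matching from non-matching permutations; in the paper it is used only to decouple odd from even increments within a single variance, for every $\sigma$. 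The actual elimination of $\sigma\notin\mathscr P_1$ is a frequency-space argument: one first localizes the spatial variables to the set $T^\sigma_\varepsilon$ where $|y_{2k-1}|<\varepsilon$ and $|\sum_{j\ge 2k-1}x_{\sigma(j)}|<\varepsilon$ (a localization your proposal never introduces, and whose complement must separately be shown negligible), and then observes that a non-pair-preserving $\sigma$ forces $|y_{2k}\pm y_{2\ell}|\le 4m\varepsilon$ for some $k\ne\ell$, so the surviving integral is bounded by $\int|\widehat f(x)|^2|\widehat f(y)|^2|x|^{-2/H}|y|^{-2/H}\mathbf 1_{\{|x\mp y|\le 4m\varepsilon\}}\,dx\,dy$, which vanishes as $\varepsilon\to 0$. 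Without this localization and over-constraint argument your proof does not close. A secondary but concrete error: the number of surviving permutations is $\#\mathscr P_1=2^{m/2}(m/2)!$ (permute the $m/2$ pairs and flip within each), not $m!/\bigl(2^{m/2}(m/2)!\bigr)$, which is the number of pairings of $\{1,\dots,m\}$; already for $m=2$ your count gives $1$ instead of $2$, and the final constant would come out wrong.
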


\begin{proof}
We divide the proof into several steps. We can assume that $a_1, a _2 >0$.

\noindent \textbf{Step 1.} We show that $\E[F_{n}(a_1,b_1; a_2, b_2)^{m}]$ is asymptotically equivalent  to $I^n_{m}$ defined in (\ref{inm}), where
we impose the  upper bound  $n^{\lambda_0}\varepsilon$ for some $\lambda_0>0$ to the  even differences $\Delta u_{2i}$ and $\Delta v_{2i}$ and the lower bound $n^{1/2 } \varepsilon$  to  the odd differences $\Delta u_{2i-1}$ and $\Delta v_{2i-1}$, respectively. Note that 
\begin{align*}
\E[F_{n}(a_1,b_1; a_2, b_2)^{m}]
&=
\frac{m!}{(2\pi)^{md}}\, n^{\frac{m(Hd-2)}{2}}  \sum_{\sigma\in\mathscr{P}}  \, \int_{\R^{md}} \int_{D^m_{a_1, b_1}\times D^m_{a_2, b_2}}     \prod _{i=1}^{m} \widehat{f} (x_i)    \notag  \\
&\quad \times
      \exp \bigg( -\frac 12 {\rm Var} \Big(\sum_{i=1}^{m} x_i \cdot (X^{(1)}_{u_i}-X^{(2)}_{v_{\sigma(i)}})\Big)\bigg) \,du\, dv\,  dx.
      \end{align*}     
For $\varepsilon\in(0,1)$ and $\ell=1,2$, we let
\begin{align} \label{oml}
O_{m,\ell}
&=D^m_{a_{\ell}, b_{\ell}}\cap \big\{0<\Delta u_{2i}<n^{\lambda_0} \varepsilon, \, i=1,2,\dots, m/2 \big\} \nonumber\\
&\qquad\qquad\qquad\cap \big\{n a_{\ell} \vee n^{1/2}\varepsilon<\Delta u_1<n b_{\ell},\, n^{1/2}\varepsilon<\Delta u_{2i-1}<n(b_{\ell}-a_{\ell}),\, i=2,\dots ,m/2 \big\}, 
\end{align}
where $\lambda_0=\frac{\lambda}{(2-Hd)m}$ with the constant $\lambda$ given in Lemma \ref{chain}, and $\Delta u_{k}=u_{k}-u_{k-1}$ for $k=1,2,\dots, m$ with the convention $u_0=0$.  Since $m$ is even, $\lambda_0<1/4$.
Set
\begin{align}
I^n_{m}  \label{inm}
&=\frac{m!}{(2\pi)^{md}}\, n^{\frac{m(Hd-2)}{2}} \sum_{\sigma\in\mathscr{P}}   \int_{O_{m,1}\times O_{m,2}} \int_{\R^{md}} \prod^{m}_{i=1}\widehat{f}(x_i)  \nonumber  \\  
&\qquad\qquad\qquad \times \exp\Big(-\frac{1}{2}\Var\big(\sum\limits^{m}_{i=1} x_i\cdot (X^{(1)}_{u_i}-X^{(2)}_{v_{\sigma(i)}})\big)\Big)\, dx\, du\, dv.
\end{align}
Then
\begin{align*}  
\left|\E[F_{n}(a_1,b_1; a_2, b_2)^{m}]-I^n_{m} \right|
&\leq c_1\, n^{\frac{m(Hd-2)}{2}} \sum_{\sigma\in\mathscr{P}} \int_{D^m_{a_1, b_1}\times D^m_{a_2, b_2}-O_{m,1}\times O_{m,2}} \int_{\R^{md}} \prod^{m}_{i=1}|\widehat{f}(x_i)| \\  
&\qquad\qquad\times \exp\Big(-\frac{1}{2}\Var\big(\sum\limits^{m}_{i=1} x_i\cdot (X^{(1)}_{u_i}-X^{(2)}_{v_{\sigma(i)}})\big)\Big)\, dx\, du\, dv \\
&\leq c_2\, n^{\frac{m(Hd-2)}{2}}  \int_{D^m_{a_1, b_1}\times D^m_{a_2, b_2}-O_{m,1}\times O_{m,2}} \int_{\R^{md}} \prod^{m}_{i=1}|\widehat{f}(y_i-y_{i+1})|   \\  
&\qquad\qquad \times \exp\Big(-\frac{\kappa}{2}\sum\limits^{m}_{i=1} |y_i|^2\big[(\Delta u_i)^{2H}+(\Delta v_i)^{2H}\big]\Big)\, dy\, du\, dv,  
\end{align*}
where  in the last inequality we applied Cauchy-Schwartz inequality and then used the symmetry of the product of the Fourier transforms.

Applying the estimate (\ref{eku2}) in Lemma \ref{chain} to the right hand side of the above inequality,  we obtain
\begin{align} \label{p9}
&\limsup\limits_{n\to\infty} \Big|\E[F_{n}(a_1,b_1; a_2, b_2)^{m}]-I^n_{m} \Big| \nn \\
&\leq c_3 \limsup\limits_{n\to\infty}  n^{\frac{m(Hd-2)}{2}}  \int_{D^m_{a_1, b_1}\times D^m_{a_2, b_2}-O_{m,1}\times O_{m,2}} \int_{\R^{md}} \prod^{m/2}_{k=1}|\widehat{f}(y_{2k})|^2  \nonumber  \\  
&\qquad\qquad\qquad \times \exp\Big(-\frac{\kappa}{2}\sum\limits^{m}_{i=1} |y_i|^2\big[(\Delta u_i)^{2H}+(\Delta v_i)^{2H}\big]\Big)\, dy\, du\, dv.
\end{align}
Recall the definition of $O_{m,\ell}$ in (\ref{oml}). We see that for all $(u,v)\in D^m_{a_1, b_1}\times D^m_{a_2, b_2}-O_{m,1}\times O_{m,2}$ there exist 1)
some $\ell\in\{1,\dots, m/2\}$ such that 
 $\Delta u_{2\ell}\geq n^{\lambda_0}\varepsilon$ or $\Delta v_{2\ell}\geq n^{\lambda_0}\varepsilon$; or 2)  some $\ell\in\{2,\dots, m/2\}$ such that  $\Delta u_{2\ell-1}\leq n^{1/2} \varepsilon$ or $\Delta v_{2\ell-1}\leq n^{1/2} \varepsilon$. 
 Therefore, the right hand side of (\ref{p9}) is less than a constant multiple of 
  \begin{align*}
& \limsup\limits_{n\to\infty}  n^{\frac{m(Hd-2)}{2}} \sum^{m/2}_{\ell=1} \int_{D^m_{a, b}\times D^m_{a, b}} \int_{\R^{md}} \prod^{m/2}_{k=1}|\widehat{f}(y_{2k})|^2 {\bf 1}_{\{\Delta u_{2\ell}\geq n^{\lambda_0} \varepsilon \} \cup \{ \Delta v_{2\ell}\geq n^{\lambda_0} \varepsilon\}} \nonumber  \\  
&\qquad\qquad\qquad \times \exp\Big(-\frac{\kappa}{2}\sum\limits^{m}_{i=1} |y_i|^2\big[(\Delta u_i)^{2H}+(\Delta v_i)^{2H}\big]\Big)\, dy\, du\, dv\\ 
&\qquad+\limsup\limits_{n\to\infty}  n^{\frac{m(Hd-2)}{2}} \sum^{m/2}_{\ell=2} \int_{D^m_{a, b}\times D^m_{a, b}} \int_{\R^{md}} \prod^{m/2}_{k=1}|\widehat{f}(y_{2k})|^2 {\bf 1}_{\{\Delta u_{2\ell-1}\leq n^{1/2} \varepsilon \} \cup \{ \Delta v_{2\ell-1}\leq n^{1/2} \varepsilon\}} \nonumber  \\  
&\qquad\qquad\qquad \times \exp\Big(-\frac{\kappa}{2}\sum\limits^{m}_{i=1} |y_i|^2\big[(\Delta u_i)^{2H}+(\Delta v_i)^{2H}\big]\Big)\, dy\, du\, dv \\
& \qquad = A^{(1)} + A^{(2)},
\end{align*}
where $a=\min\{a_1,a_2\}$ and $b=\max\{b_1,b_2\}$.

Using Cauchy-Schwarz inequality as in Subsection 3.1 and then doing some calculation yield
\begin{align*}
&A^{(1)} \leq c_4 \limsup\limits_{n\to\infty} \sum^{m/2}_{\ell=1} \int_{  [n^{\lambda_0} \varepsilon, nb] \times [0,nb] \cup
  [0,nb] \times  [n^{\lambda_0} \varepsilon, nb] }  \int_{\R^{d}} |\widehat{f}(y_{2\ell})|^2  \\
   & \qquad \qquad  \qquad \qquad  \times \exp\Big(-\frac{\kappa}{2}|y_{2\ell}|^2\big(s_{2\ell}^{2H}+ r_{2\ell}^{2H}\big)\Big)\, dy_{2\ell}\, ds_{2\ell}dr_{2\ell}=0,
  \end{align*}
  and
  \begin{align*}
  A^{(2)} \le c_5 \limsup\limits_{n\to\infty}  n^{Hd-2} \sum^{m/2}_{\ell=2} \int _{ [0, n^{1/2} \varepsilon ]\times [0, nb ] \cup [[0, nb] \times [0, n^{1/2} \varepsilon]}
   \big(s_{2\ell-1}^{2H}+r_{2\ell-1}^{2H}\big)^{-\frac{d}{2}} ds_{2\ell-1}\, dr_{2\ell-1}=0.
\end{align*}
  So $\limsup\limits_{n\to\infty} \left|\E[F_{n}(a_1,b_1; a_2, b_2)^{m}]-I^n_{m} \right|=0$.

\noindent \textbf{Step 2.} We next show that $I^n_{m}$  is asymptotically equal to $\overline{I}^{n}_{m}$ defined in (\ref{gamma1}) below.  Making the change of variables $y_i=\sum\limits^{m}_{j=i}x_j$ for $i=1,2,\dots, m$, we can write
\begin{align*}
I^n_{m}
&=\frac{m!}{(2\pi)^{md}}\, n^{\frac{m(Hd-2)}{2}} \sum_{\sigma\in\mathscr{P}}  \int_{O_{m,1}\times O_{m,2}} \int_{\R^{md}} \prod^{m}_{i=1}\widehat{f}(y_i-y_{i+1}) \exp\Big(-\frac{1}{2}\Var\big(\sum\limits^{m}_{i=1} y_i\cdot  \Delta X_{u_i}\big)\Big)\\
&\qquad\qquad\qquad \times \exp\Big(-\frac{1}{2}\Var\big(\sum\limits^{m}_{i=1} \sum^{m}_{j=i} ( y_{\sigma(j)}-y_{\sigma(j)+1} )\cdot \Delta X_{v_i}\big)\Big)\, dy\, du\, dv.
\end{align*}

For any $\varepsilon\in(0,1)$, define 
\begin{align} \label{gamma1}
\overline{I}^{n}_{m}
&=\frac{m!}{(2\pi)^{md}}\, n^{\frac{m(Hd-2)}{2}} \sum_{\sigma \in \mathscr{P}} \int_{O_{m,1}\times O_{m,2}} \int_{T^{\sigma}_{\varepsilon}} \prod^{m}_{i=1}\widehat{f}(y_i-y_{i+1}) \exp\Big(-\frac{1}{2}\Var\big(\sum\limits^{m}_{i=1} y_i\cdot \Delta X_{u_i} \big)\Big) \nonumber \\
&\qquad\qquad\qquad \times \exp\Big(-\frac{1}{2}\Var\big(\sum\limits^{m}_{i=1} \sum^{m}_{j=i} ( y_{\sigma(j)}-y_{\sigma(j)+1} )\cdot  \Delta X_{v_i}\big)\Big)\, dy\, du\, dv,
\end{align}
where 
\begin{align*}
T^{\sigma}_{\varepsilon}=\R^{md}\cap \Big\{|y_{2k-1}|<\varepsilon,   \Big|\sum^{m}_{j={2k-1}} ( y_{\sigma(j)}-y_{\sigma(j)+1})\Big|<\varepsilon,\; k=1,2,\dots, m/2\Big\}.
\end{align*}

Recall the change of variables $y_i=\sum\limits^{m}_{j=i}x_j$ for $i=1,2,\dots, m$. Then $\overline{I}^{n}_{m}$ in (\ref{gamma1}) can also be written as
\begin{align*} 
\overline{I}^{n}_{m}
&=\frac{m!}{(2\pi)^{md}}\, n^{\frac{m(Hd-2)}{2}}\sum_{\sigma\in\mathscr{P}}\int_{O_{m,1} \times O_{m,2}} \int_{\overline{T}^{\sigma}_{\varepsilon}}  \prod^{m}_{i=1}\widehat{f}(x_i) \exp\Big(-\frac{1}{2}\Var\big(\sum\limits^{m}_{i=1} \sum\limits^{m}_{j=i}x_j\cdot \Delta X_{u_i})\big)\Big) \nonumber \\
&\qquad\qquad\qquad \times \exp\Big(-\frac{1}{2}\Var\big(\sum\limits^{m}_{i=1} \sum^{m}_{j=i} x_{\sigma(j)} \cdot  \Delta X_{v_i} \big)\Big)\, dx\, du\, dv,
\end{align*}
where 
\begin{align} \label{domain}
\overline{T}^{\sigma}_{\varepsilon}=\R^{md}\cap \Big\{\Big|\sum\limits^{m}_{j=2k-1}x_j\Big|<\varepsilon,\;  \Big|\sum^{m}_{j={2k-1}} x_{\sigma(j)}\Big|<\varepsilon,\; k=1,2,\dots, m/2\Big\}.
\end{align}

Using Cauchy-Schwarz inequality as in Subsection 3.1 and then making the change of variables $y_i=\sum\limits^{m}_{j=i}x_j$ for $i=1,2,\dots, m$, we obtain 
\begin{align*}
|I^n_{m}-\overline{I}^{n}_{m}| 
&\leq c_6\, n^{\frac{m(Hd-2)}{2}} \int_{O_{m,1} \times O_{m,2}} \int_{\R^{md}-T_{\varepsilon}} \prod^{m}_{i=1}|\widehat{f}(y_i-y_{i+1})|    \exp\Big(-\frac{1}{2}\Var\big(\sum\limits^{m}_{i=1} y_i\cdot \Delta X_{u_i} \big)\Big) \nonumber \\
&\qquad\qquad\qquad \times \exp\Big(-\frac{1}{2}\Var\big(\sum\limits^{m}_{i=1} y_i \cdot  \Delta X_{v_i}\big)\Big)\, dy\, du\, dv,
\end{align*}
where 
\begin{align*}
T_{\varepsilon}=\R^{md}\cap \big\{ |y_{2k-1}|<\varepsilon,\; k=1,2,\dots, m/2\big\}.
\end{align*}

Now, by Lemma \ref{chain} and the  nondeterminism property (\ref{eku1}),  we can write 
\begin{align*}
|I^n_{m}-\overline{I}^{n}_{m}|
&\leq c_{7}n^{-\lambda}+c_{7}\, n^{\frac{m(Hd-2)}{2}}\int_{O_{m,1} \times O_{m,2}}\int_{\R^{md}-T_{\varepsilon}} \prod^{m/2}_{j=1}|\widehat{f}(y_{2j})|^2\\
&\qquad\qquad\qquad\qquad \times \exp\Big(-\frac{\kappa}{2}\sum\limits^{m}_{i=1} |y_i|^2\big[(\Delta u_i)^{2H}+(\Delta v_i)^{2H}\big]\Big)\, dy \, du\, dv\\
&\leq c_{7}n^{-\lambda}+c_{8}\, n^{Hd-2} \int^{nb}_{n^{1/2}\varepsilon}\int^{nb}_{n^{1/2}\varepsilon}\int_{|x|\geq \varepsilon} \exp\Big(-\frac{\kappa}{2}|x|^2(s^{2H}+t^{2H})\Big) \, dx\, ds\, dt\\
&\leq c_{7}n^{-\lambda}+c_{9}\, n^{Hd-2} e^{-\frac{\kappa}{2}\varepsilon^{2+2H} n^{H}} \int^{nb}_{n^{1/2}\varepsilon}\int^{nb}_{n^{1/2}\varepsilon}\int_{|x|\geq \varepsilon} \exp\Big(-\frac{\kappa}{4}|x|^2(s^{2H}+t^{2H})\Big) \, dx\, ds\, dt\\
&\leq c_{7}n^{-\lambda}+c_{10}\, e^{-\frac{\kappa}{2}\varepsilon^{2+2H} n^{H}},
\end{align*}
where $b=\max\{b_1,b_2\}$. This implies that $\limsup\limits_{n\to\infty} |I^n_{m}-\overline{I}^{n}_{m}|=0$.
 
\noindent \textbf{Step 3.} For any $\sigma\in\mathscr{P}$, let 
\begin{align*} 
\overline{I}^{n,\sigma}_{m}
&=\frac{m!}{(2\pi)^{md}}\, n^{\frac{m(Hd-2)}{2}} \int_{O_{m,1} \times O_{m,2}} \int_{\overline{T}^{\sigma}_{\varepsilon}}  \prod^{m}_{i=1}\widehat{f}(x_i) \exp\Big(-\frac{1}{2}\Var\big(\sum\limits^{m}_{i=1} \sum\limits^{m}_{j=i}x_j \cdot \Delta X_{u_i} \big)\Big) \nonumber \\
&\qquad\qquad\qquad \times \exp\Big(-\frac{1}{2}\Var\big(\sum\limits^{m}_{i=1} \sum^{m}_{j=i} x_{\sigma(j)} \cdot  \Delta X_{v_i} \big)\Big)\, dx\, ds.
\end{align*}
Then $\overline{I}^{n}_{m}= \sum\limits_{\sigma\in\mathscr{P}} \overline{I}^{n,\sigma}_{m}$ and in the sequel we will study the asymptotic behavior of $\overline{I}^{n,\sigma}_{m}$ for a fixed $\sigma$.  To do this we consider a partition of the set of permutations 
\[
\mathscr{P}= \mathscr{P}_0 \cup \mathscr{P}_1,
\]
where $\mathscr{P}_1$ is the set of permutations $\sigma \in \mathscr{P}$ such that  the collection of pairs $\big\{\{2k,2k-1\},\, k=1,2,\dots, m/2\big\}$ is invariant by $\sigma$,  in the sense that
\[
\big\{\{2k,2k-1\},\, k=1,2,\dots, m/2\big\}=\big\{\{\sigma(2k),\sigma(2k-1)\},\, k=1,2,\dots, m/2\big\}.
\]
 
\noindent \textbf{Step 4.}  We first  study $ \overline{I}^{n,\sigma}_{m} $ for $\sigma \in \mathscr{P}_0$.
For any $\sigma\in \mathscr{P}_0$, there exist $j, k,\ell\in\{1,2,\dots, m/2\}$ with $k\neq \ell$ such that 
\begin{align} \label{permutation}
\sigma(2j)\in\{2k,2k-1\}\; \text{and}\; \sigma(2j-1)\in\{2\ell,2\ell-1\}.
\end{align}
Recall the definition of $\overline{T}^{\sigma}_{\varepsilon}$ in (\ref{domain}). For any $k=1,2,\dots, m/2$,
\begin{align} \label{paring}
|x_{2k}+x_{2k-1}|\leq (m/2-k+1)\varepsilon\quad \text{and}\quad |x_{\sigma(2k)}+x_{\sigma(2k-1)}|\leq (m/2-k+1)\varepsilon.
\end{align}
We claim that 
\begin{align*} 
|x_{2k}-x_{2\ell}|\leq 2m\varepsilon \quad \text{or}\quad |x_{2k}+x_{2\ell}|\leq 2m\varepsilon.
\end{align*}
In fact, from (\ref{permutation}), there are only four possibilities for the values of $\sigma(2j)$ and $\sigma(2j-1)$: (1) $\sigma(2j)=2k$ and $\sigma(2j-1)=2\ell$; (2) $\sigma(2j)=2k$ and $\sigma(2j-1)=2\ell-1$; (3) $\sigma(2j)=2k-1$ and $\sigma(2j-1)=2\ell$;  (4) $\sigma(2j)=2k-1$ and $\sigma(2j-1)=2\ell-1$. In the first case, the claim follows from (\ref{paring}) directly. In the second and third cases, 
\begin{align*} 
|x_{2k}-x_{2\ell}|\leq |x_{2k}-(-1)^{\sigma(2j)}x_{\sigma(2j)}|+|x_{\sigma(2j)}+x_{\sigma(2j-1)}|+|(-1)^{\sigma(2j-1)}x_{\sigma(2j-1)}-x_{2\ell}|\leq 2m\varepsilon.
\end{align*}
In the last case,
\begin{align*} 
|x_{2k}+x_{2\ell}|\leq |x_{2k}+x_{\sigma(2j)}|+|x_{\sigma(2j)}+x_{\sigma(2j-1)}|+|x_{\sigma(2j-1)}+x_{2\ell}|\leq 2m\varepsilon.
\end{align*}

We next show that 
\[
|y_{2k}-y_{2\ell}|\leq 4m\varepsilon \quad \text{or}\quad |y_{2k}-y_{2\ell}|\leq 4m\varepsilon.
\] 
Without loss of generality, we can assume that $k<\ell$. Then
\begin{align*}
|y_{2k}-y_{2\ell}|
&=\bigg|\sum^{2\ell}_{j=2k+1} x_j+x_{2k}-x_{2\ell}\bigg|\leq 4m\varepsilon
\end{align*}
if $|x_{2k}-x_{2\ell}|\leq 2m\varepsilon$, and 
\begin{align*}
|y_{2k}+y_{2\ell}|
&=|2\sum^{m}_{j=2\ell+1} x_j+\sum^{2\ell}_{j=2k+1} x_j+x_{2k}+x_{2\ell}|\leq 4m\varepsilon
\end{align*}
if $|x_{2k}+x_{2\ell}|\leq 2m\varepsilon$.

Using  arguments  similar as  those in Subsection 3.1 and then Lemma \ref{chain}, we can get
\begin{align*} 
&|\overline{I}^{n,\sigma}_{m}|\\
&\leq c_{11}\, n^{\frac{m(Hd-2)}{2}}  \int_{O_{m,1} \times O_{m,2}} \int_{\overline{T}^{\sigma}_{\varepsilon}} \prod^{m}_{i=1}|\widehat{f}(x_i)| \exp\Big(-\frac{1}{2}\Var\big(\sum\limits^{m}_{i=1} \sum\limits^{m}_{j=i}x_j \cdot \Delta X_{u_i} \big)\Big) \nonumber \\
&\qquad\qquad\qquad \times \exp\Big(-\frac{1}{2}\Var\big(\sum\limits^{m}_{i=1} \sum^{m}_{j=i} x_{j} \cdot  \Delta X_{v_i}\big)\Big)\, dx\, du\, dv\\
&\leq c_{12}n^{-\lambda}+c_{12}\,\sum_{1\leq k\neq \ell\leq m/2}\, n^{\frac{m(Hd-2)}{2}} \int_{O_{m,1} \times O_{m,2}} \int_{\R^{md}} \prod^{m/2}_{j=1}|\widehat{f}(y_{2j})|^2\\
&\qquad\qquad\qquad \times \exp\Big(-\frac{\kappa_{H}}{2}\sum\limits^{m}_{i=1} |y_i|^2 [(\Delta u_i)^{2H}+(\Delta v_i)^{2H}]\Big) \, {\bf 1}_{\{|y_{2k}\pm y_{2\ell}|\leq 4m\varepsilon\}}\, dy\, du\, dv,
\end{align*}
where 
\[
\{|y_{2k}\pm y_{2\ell}|\leq 4m\varepsilon\}=\{|y_{2k}+y_{2\ell}|\leq4m\varepsilon\}\cup\{|y_{2k}-y_{2\ell}|\leq 4m\varepsilon\}.
\]

Integrating with respect to all $u$, $v$, and $y_i$ with $i\neq 2k,2\ell$ gives
\begin{align*}
\limsup\limits_{n\to\infty}|\overline{I}^{n,\sigma}_{m}|\leq c_{13}\, \int_{\R^{2d}} |\widehat{f}(x)|^2 |\widehat{f}(y)|^2|x|^{-\frac{2}{H}}|y|^{-\frac{2}{H}}\,  {\bf 1}_{\{|x-y|\leq 4m\varepsilon\}}\, dx\, dy
\end{align*}
for all $\sigma\in\mathscr{P}_0$. Taking into account that $\varepsilon$ is arbitrary, we see that there  will be no contribution in the limit for $\sigma\in \mathscr{P}_0$. 

 \noindent \textbf{Step 5.}  Now we analyze  $ \overline{I}^{n,\sigma}_{m} $ for $\sigma \in \mathscr{P}_1$.
 Note that $\overline{I}^{n,\sigma}_{m}$ can be written as
\begin{align*}
\overline{I}^{n,\sigma}_{m}
&=\frac{m!}{(2\pi)^{md}}\, n^{\frac{m(Hd-2)}{2}} \int_{O_{m,1}\times O_{m,2}} \int_{T^{\sigma}_{\varepsilon}} \prod^{m}_{i=1}\widehat{f}(y_i-y_{i+1}) \exp\Big(-\frac{1}{2}\Var\big(\sum\limits^{m}_{i=1} y_i\cdot \Delta X_{u_i} \big)\Big) \nonumber \\
&\qquad\qquad\qquad \times \exp\Big(-\frac{1}{2}\Var\big(\sum\limits^{m}_{i=1} \sum^{m}_{j=i} ( y_{\sigma(j)}-y_{\sigma(j)+1} )\cdot  \Delta X_{v_i} \big)\Big)\, dy\, du\, dv.
\end{align*}
Fix $\gamma>1$. Let $T^{\sigma}_{\varepsilon,1}=T^{\sigma}_{\varepsilon}-T^{\sigma}_{\varepsilon,2}$ where 
\begin{align} \label{tsv2}
T^{\sigma}_{\varepsilon,2}=T^{\sigma}_{\varepsilon}\cap \{|y_{2i}|>\gamma \varepsilon:\, i=1,2,\dots, m/2\}
\end{align}
and define
\begin{align*}
\overline{I}^{n,\sigma}_{m,\gamma, 1}
&=\frac{m!}{(2\pi)^{md}}\, n^{\frac{m(Hd-2)}{2}} \int_{O_{m,1}\times O_{m,2}} \int_{T^{\sigma}_{\varepsilon,1}} \prod^{m}_{i=1}\widehat{f}(y_i-y_{i+1}) \exp\Big(-\frac{1}{2}\Var\big(\sum\limits^{m}_{i=1} y_i\cdot \Delta X_{u_i} \big)\Big) \nonumber \\
&\qquad\qquad\qquad \times \exp\Big(-\frac{1}{2}\Var\big(\sum\limits^{m}_{i=1} \sum^{m}_{j=i} ( y_{\sigma(j)}-y_{\sigma(j)+1} )\cdot \Delta X_{v_i} \big)\Big)\, dy\, du\, dv,
\end{align*}
\begin{align*}
\overline{I}^{n,\sigma}_{m,\gamma, 2}
&=\frac{m!}{(2\pi)^{md}}\, n^{\frac{m(Hd-2)}{2}} \int_{O_{m,1}\times O_{m,2}} \int_{T^{\sigma}_{\varepsilon,2}} \prod^{m}_{i=1}\widehat{f}(y_i-y_{i+1}) \exp\Big(-\frac{1}{2}\Var\big(\sum\limits^{m}_{i=1} y_i\cdot \Delta X_{u_i}\big)\Big) \nonumber \\
&\qquad\qquad\qquad \times \exp\Big(-\frac{1}{2}\Var\big(\sum\limits^{m}_{i=1} \sum^{m}_{j=i} ( y_{\sigma(j)}-y_{\sigma(j)+1} )\cdot \Delta X_{v_i} \big)\Big)\, dy\, du\, dv,
\end{align*}
and 
\begin{align*}
\overline{I}^{n,\sigma}_{m,\gamma, 3}
&=\frac{m!}{(2\pi)^{md}}\, n^{\frac{m(Hd-2)}{2}} \int_{O_{m,1}\times O_{m,2}} \int_{T^{\sigma}_{\varepsilon,2}} \prod^{m/2}_{j=1} |\widehat{f}(y_{2j})|^2 \exp\Big(-\frac{1}{2}\Var\big(\sum\limits^{m}_{i=1} y_i\cdot \Delta X_{u_i} \big)\Big) \nonumber \\
&\qquad\qquad\qquad \times \exp\Big(-\frac{1}{2}\Var\big(\sum\limits^{m}_{i=1} \sum^{m}_{j=i} ( y_{\sigma(j)}-y_{\sigma(j)+1} )\cdot \Delta X_{v_i} \big)\Big)\, dy\, du\, dv.
\end{align*}
Obviously, $\overline{I}^{n,\sigma}_{m}=\overline{I}^{n,\sigma}_{m,\gamma, 1}+\overline{I}^{n,\sigma}_{m,\gamma, 2}$.  We will show the following two properties:

\smallskip
\noindent
{\it (i)}   For any $\sigma\in\mathscr{P}_1$ and for some constant $c_{14}>0$,
\begin{equation} \label{eku6}
\limsup\limits_{n\to\infty} |\overline{I}^{n,\sigma}_{m,\gamma, 1} |  \le c_{14} \int_{|y|\leq \gamma \varepsilon} |\widehat{f}(y)|^2 |y|^{-\frac{2}{H}}\, dy.
\end{equation}

\smallskip
\noindent
{\it (ii)}   For any $\sigma\in\mathscr{P}_1$,
 \begin{equation} \label{eku7}
 \limsup\limits_{n\to\infty} |\overline{I}^{n,\sigma}_{m,\gamma, 2}-\overline{I}^{n,\sigma}_{m,\gamma, 3}|=0.
 \end{equation}

\smallskip
\noindent
{\it Proof of (\ref{eku6})}: This follows from Lemma \ref{chain} and arguments similar to those in Subsection 3.1.

\smallskip
\noindent
{\it Proof of (\ref{eku7})}: 
Using Cauchy-Schwarz inequality and the boundedness of $\widehat{f}$,
\begin{align*}
&\int_{O_{m,1}\times O_{m,2}} \int_{T^{\sigma}_{\varepsilon,2}} \bigg|\prod^{m}_{i=1}\widehat{f}(y_i-y_{i+1})-\prod^{m/2}_{j=1}|\widehat{f}(y_{2j})|^2\bigg| \exp\Big(-\frac{1}{2}\Var\big(\sum\limits^{m}_{i=1} y_i\cdot  \Delta X_{u_i}\big)\Big) \nonumber \\
&\qquad\qquad\qquad \times \exp\Big(-\frac{1}{2}\Var\big(\sum\limits^{m}_{i=1} \sum^{m}_{j=i} ( y_{\sigma(j)}-y_{\sigma(j)+1} )\cdot  \Delta X_{v_i} \big)\Big)\, dy\, du\, dv\\
&\leq  c_{15} \Bigg[\int_{O_{m,1}\times O_{m,1}} \int_{T^{\sigma}_{\varepsilon,2}} \bigg|\prod^{m}_{i=1}\widehat{f}(y_i-y_{i+1})-\prod^{m/2}_{j=1}|\widehat{f}(y_{2j})|^2 \bigg| \exp\Big(-\frac{1}{2}\Var\big(\sum\limits^{m}_{i=1} y_i\cdot \Delta X_{u_i}\big)\Big) \nonumber \\
&\qquad\qquad\qquad \times \exp\Big(-\frac{1}{2}\Var\big(\sum\limits^{m}_{i=1}  y_i\cdot  \Delta X_{v_i}\big)\Big)\, dy\, du\, dv \Bigg]^{1/2} \\
&\qquad\qquad \times \Bigg[\int_{O_{m,2}\times O_{m,2}} \int_{T^{\sigma}_{\varepsilon,2}}\exp\Big(-\frac{1}{2}\Var\big(\sum\limits^{m}_{i=1}\sum^{m}_{j=i} ( y_{\sigma(j)}-y_{\sigma(j)+1} )\cdot \Delta X_{u_i}\big)\Big) \nonumber \\
&\qquad\qquad\qquad\qquad \times \exp\Big(-\frac{1}{2}\Var\big(\sum\limits^{m}_{i=1} \sum^{m}_{j=i} ( y_{\sigma(j)}-y_{\sigma(j)+1} )\cdot \Delta X_{v_i}\big)\Big)\, dy\, du\, dv \Bigg]^{1/2} \\
&  =: c_{15} ( C^{(1)}_nC^{(2)}_n)^{1/2}.
\end{align*}

By Lemma \ref{chain},
\begin{equation}
\label{eku8}
n^{\frac{m(Hd-2)}{2}} C^{(1)}_n \leq c_{16} n^{-\lambda}.
\end{equation}

On the other hand, using  the nondeterminism property (\ref{eku1}) and integrating with respect to $y$ gives
\begin{align*}
 n^{\frac{m(Hd-2)}{2} }C^{(2)}_n 
&\leq c_{17} n^{\frac{m(Hd-2)}{2}}\int_{O_{m,2}\times O_{m,2}} \prod\limits^{m}_{i=1} \big[(u_i-u_{i-1})^{2H}+(v_i-v_{i-1})^{2H}\big]^{-\frac{d}{2}}\, du\, dv\\
&\leq c_{18} n^{\frac{m\lambda_0(2-Hd)}{2}}\\
&=c_{18} n^{\frac{\lambda}{2}},
\end{align*}
where in the second inequality we use the definition of $O_{m,2}$ in (\ref{oml}).

Therefore, $\limsup_{n\to\infty} |\overline{I}^{n,\sigma}_{m,\gamma, 2}-\overline{I}^{n,\sigma}_{m,\gamma, 3}|\leq \limsup\limits_{n\to\infty} c_{19}\, n^{-\frac{\lambda}{4}}=0$.

\noindent \textbf{Step 6.} We will finally derive  the limit of $I^n_m$ as $n\to\infty$. Consider the decomposition
 \begin{align*}
 \Var\big(\sum_{i=1} ^m y_i \cdot \Delta X_{u_i} \big)
&=  \Var\big(\sum_{i=1, i \, odd } ^m y_i \cdot \Delta X_{u_i} \big)+
\sum_{i=1, i \, even } ^m \Var\big(y_i \cdot \Delta X_{u_i} \big) \\
&\qquad\qquad\qquad+ \Cov  \Big( \sum_{i=1,  i \,  odd } ^m y_i \cdot \Delta X_{u_i}, \sum_{i=1, i \, even } ^m y_i \cdot \Delta X_{u_i} \Big)\\
&\qquad\qquad\qquad\qquad+ \sum^m_{i,j=2,\, even, i\neq j}\Cov  \Big( y_i \cdot \Delta X_{u_i}, y_j\cdot \Delta X_{u_j} \Big).
\end{align*}
By the definition of $O_{m,\ell}$ in (\ref{oml}),  if $i$ is odd and $j$ is even, then
if $i>j$ we have $\frac {\Delta u_i}{\Delta u_j} \ge n^{\frac 12-\lambda_0}$ and if $i<j$, then  $\frac {\Delta u_j}{\Delta u_i} \le n^{ \lambda_0-\frac 12}$. 
Therefore, by Hypothesis {\bf (H3)},
we can write
 \begin{align*}
& \left| \Cov  \Big( \sum_{i=1,  i \,  odd } ^m y_i \cdot \Delta X_{u_i}, \sum_{i=1, i \, even } ^m y_i \cdot \Delta X_{u_i} \Big) \right| \\
& \qquad \qquad  \le \sum_{i=1,  i \,  odd } ^m \sum_{j=1, j \, even } ^m |y_i|   |y_j| \beta(n^{\frac 12 -\lambda_0})\sqrt{ \E (\Delta X_{u_i})^2  \E (\Delta X_{u_j})^2 }\\
& \qquad \qquad \le \frac m4  \beta(n^{\frac 12 -\lambda_0}) \left(     \sum_{i=1,  i \,  odd }^m |y_i |^2  \E (\Delta X_{u_i})^2 +  \sum_{j=1,  j\, even}^m |y_i |^2  \E (\Delta X_{u_j})^2  \right).
 \end{align*}
On the other hand, by the definition of $O_{m,\ell}$ in (\ref{oml}), if $i<j$ are even, then
$\frac {\Delta u_i} {\Delta u_{j-1}} \le n^{\lambda_0-\frac 12}$ and  $\frac {\Delta u_j} {\Delta u_{j-1}} \le n^{\lambda_0-\frac 12}$. Therefore,  by Hypothesis {\bf (H3)}  we can write
  \begin{align*}
&\left| \sum^m_{i,j=2,\, even, i\neq j}\Cov  \Big( y_i \cdot \Delta X_{u_i}, y_j\cdot \Delta X_{u_j} \Big)\right|\\
&\qquad \qquad\leq \sum^m_{i,j=2,\, even, i\neq j} |y_i|   |y_j| \beta(n^{\frac 12 -\lambda_0}) \sqrt{ \E (\Delta X_{u_i})^2  \E (\Delta X_{u_j})^2 }\\
&\qquad \qquad \leq \frac m2 \beta(n^{\frac 12 -\lambda_0})\sum^m_{i=2,\, even} |y_i |^2  \E (\Delta X_{u_i})^2.
\end{align*}

By hypothesis {\bf (H2)}, there exists $\gamma_0\geq 1$ such that $\E (\Delta X_{u_i})^2\leq (\alpha_2 + \phi_2(\frac{1}{\gamma_0})) |\Delta u_i|^{2H}$ if $\frac{\Delta u_i}{u_{i-1}}\leq 1/\gamma_0$. Moreover, if $\frac{\Delta u_i}{u_{i-1}}\geq 1/\gamma_0$, by self-similarity, then 
\begin{align*}
\E (\Delta X_{u_i})^2\leq 2\E (X_{u_i})^2+2\E (X_{u_{i-1}})^2\leq 4|u_i|^{2H}\E (X_1)^2\leq 4\,\E (X_1)^2 (\gamma_0+1)^{2H}|\Delta u_i|^{2H}.
\end{align*}
Now by Hypothesis {\bf (H1)},
 \[
  \sum_{i=1,  i \,  odd }^m |y_i |^2  \E (\Delta X_{u_i})^2 \le  \frac 1{\kappa}  \Big[\alpha_2 + \phi_2(\frac{1}{\gamma_0})+4\,\E (X_1)^2 (\gamma_0+1)^{2H}\Big] \Var    \left(  \sum_{i=1,  i \,  odd }^m y_i \cdot \Delta X_{u_i} \right).
  \]
Using Hypothesis {\bf (H2)} and the definition of $O_{m,\ell}$ in (\ref{oml}),  $\Var\big(\sum\limits_{i=1} ^m y_i \cdot \Delta X_{u_i} \big)$ is between 
\[
(1-\beta(n))\Var\big(\sum_{i=1, i \,  odd } ^m y_i \cdot \Delta X_{u_i} \big)  +(1-\beta(n))(\alpha_2-\phi_1(n^{\lambda_0-\frac{1}{2}}))\sum_{i=1,  i\, even } ^m |y_i|^2 |\Delta u_i| ^{2H},
\]
and 
\[
(1+\beta(n))\Var\big(\sum_{i=1, i \,  odd } ^m y_i \cdot \Delta X_{u_i} \big)  +(1+\beta(n))(\alpha_2+\phi_2(n^{\lambda_0-\frac{1}{2}}))\sum_{i=1,  i\, even } ^m |y_i|^2 |\Delta u_i| ^{2H},
\]
where $\beta(n)=c_{20} \beta(n^{\frac 12 -\lambda_0})$.

Similarly, on $O_{m,\ell}$, $\Var\big(\sum\limits_{i=1, i \,  odd } ^m y_i \cdot (X_{u_i}-X_{u_{i-2}}) \big)$ is between 
\[
(1-\overline{\beta}(n))\Var\big(\sum\limits_{i=1, i \,  odd } ^m y_i \cdot \Delta X_{u_i} \big)
\]
and
\[
(1+\overline{\beta}(n))\Var\big(\sum\limits_{i=1, i \,  odd } ^m y_i \cdot \Delta X_{u_i} \big).
\]
where $\overline{\beta}(n)=c_{21} \beta(n^{\frac 12 -\lambda_0})+c_{22}\frac{n^{2H(\lambda_0-\frac 12)}}{\kappa}$.

Then $\limsup\limits_{n\to\infty} \overline{I}^{n,\sigma}_{m,\gamma, 3}$ is less than
\begin{align} \label{sum}
&\limsup\limits_{n\to\infty} \frac{m!}{(2\pi)^{md}}\, n^{\frac{m(Hd-2)}{2}} \int_{O_{m,1} \times O_{m,2}} \int_{T^{\sigma}_{\varepsilon,2}}  \prod^{m/2}_{j=1}|\widehat{f}(y_{2j})|^2 \nonumber\\
& \qquad\times \exp\Big(-\frac{1}{2}\frac{1-\beta(n)}{1+\overline{\beta}(n)}\Var\big(\sum\limits^{m}_{i=1, i\, odd} y_i\cdot (X_{u_i}-X_{u_{i-2}})\big)\Big)\nonumber\\
&\qquad\times\exp\Big(-\frac{1}{2}(1-\beta(n))(\alpha_2-\phi_1(n^{\lambda_0-\frac{1}{2}}))\sum\limits^{m}_{i=1, i\, even}|y_i|^2(\Delta u_i)^{2H}\Big) \nonumber\\
&\qquad\times \exp\Big(-\frac{1}{2}\frac{1-\beta(n)}{1+\overline{\beta}(n)}\Var\big(\sum\limits^{m}_{i=1, i\, odd} \sum^{m}_{j=i} ( y_{\sigma(j)}-y_{\sigma(j)+1} )\cdot (X_{v_i}-X_{v_{i-2}})\big)\Big)\nonumber\\
&\qquad \times \exp\Big(-\frac{1}{2}(1-\beta(n))(\alpha_2-\phi_1(n^{\lambda_0-\frac{1}{2}}))\sum\limits^{m}_{i=1, i\, even} |\sum^{m}_{j=i} ( y_{\sigma(j)}-y_{\sigma(j)+1} )|^2(\Delta v_i)^{2H}\Big)\, dy\, du\, dv.
\end{align}

Recall the definition of $\mathscr{P}_1$ in \textbf{Step 3}. It is easy to see that $\# \mathscr{P}_1=2^{\frac{m}{2}}(\frac{m}{2})!$. Moreover, for any $\sigma\in \mathscr{P}_1$,  the expression of summation
\begin{align*} \label{sum}
\sum^{m}_{j=i} ( y_{\sigma(j)}-y_{\sigma(j)+1})
\end{align*}
on the right-hand side of (\ref{sum})  after simplification only has two possibilities. One is that it consists of only variables $y$ with odd indices when $i$ is odd. The other is that there is only one variable $y$ with even index in its expression when $i$ is even. Note that all variables $y$ with odd indices are in the ball centered at the origin with radius $\varepsilon$ and $\varepsilon$ is a positive constant which could be arbitrary small.  Recall the definition of $T^{\sigma}_{\varepsilon,2}$ in (\ref{tsv2}). Choosing $\gamma$ large enough gives 
\[
\sum\limits^{m}_{i=1, i\, even} |\sum^{m}_{j=i} ( y_{\sigma(j)}-y_{\sigma(j)+1} )|^2\geq (1-\frac{m}{\gamma})\sum\limits^{m}_{i=1, i\, even} |y_{\overline{\sigma}(i)}|^2,
\]
where $\overline{\sigma}(i)=\sigma(i)$ if $\sigma(i)$ is even and $\sigma(i-1)$ otherwise.  

So the right hand-side of (\ref{sum}) is less or equal  than
\begin{align*}
&\limsup\limits_{n\to\infty} \frac{m!}{(2\pi)^{md}}\, n^{\frac{m(Hd-2)}{2}}  \int_{O_{m,1} \times O_{m,2}} \int_{T^{\sigma}_{\varepsilon,2}} \prod^{m/2}_{j=1}|\widehat{f}(y_{2j})|^2\\
& \qquad\times \exp\Big(-\frac{1}{2}\frac{1-\beta(n)}{1+\overline{\beta}(n)}\Var\big(\sum\limits^{m}_{i=1, i\, odd} y_i\cdot (X_{u_i}-X_{u_{i-2}})\big)\Big)\\
&\qquad\times \exp\Big(-\frac{1}{2}(1-\beta(n))(\alpha_2-\phi_1(n^{\lambda_0-\frac{1}{2}}))\sum\limits^{m}_{i=1, i\, even}|y_i|^2(\Delta u_i)^{2H}\Big) \\
&\qquad\times \exp\Big(-\frac{1}{2}\frac{1-\beta(n)}{1+\overline{\beta}(n)}\Var\big(\sum\limits^{m}_{i=1, i\, odd} \sum^{m}_{j=i} ( y_{\sigma(j)}-y_{\sigma(j)+1} )\cdot \big(X_{v_i}-X_{v_{i-2}})\big)\Big)\\
&\qquad\times \exp\Big(-\frac{1}{2}(1-\beta(n))(\alpha_2-\phi_1(n^{\lambda_0-\frac{1}{2}}))(1-\frac{m}{\gamma})\sum\limits^{m}_{i=1, i\, even} |y_{\overline{\sigma}(i)}|^2(\Delta v_i)^{2H}\Big)\, dy\, du\, dv.
\end{align*}

As a consequence,  $\limsup\limits_{n\to\infty}\sum\limits_{\sigma\in \mathscr{P}_1} \overline{I}^{n,\sigma}_{m,\gamma,3} $ is less than or equal to the product of 
\begin{align*}
 \left(\int^{+\infty}_0\int^{+\infty}_0\int_{\R^d}|\widehat{f}(z)|^2 \exp\left(-\frac{\alpha_2}{2}|z|^2(u^{2H}+(1-\frac{m}{\gamma})v^{2H})\right)\, dz\, du\, dv\right)^{m/2} 
 \end{align*}
 and
\begin{align} \label{lt1}
&\limsup\limits_{n\to\infty}\sum\limits_{\sigma\in \mathscr{P}_1} \frac{m!}{(2\pi)^{md}}\, n^{\frac{m(Hd-2)}{2}}  \int_{D^{m/2}_{odd,1}\times D^{m/2}_{odd, 2}} \int_{\R^{\frac{md}{2}}} \nn\\
& \qquad\times \exp\Big(-\frac{1}{2}\frac{1-\beta(n)}{1+\overline{\beta}(n)}\Var\big(\sum\limits^{m}_{i=1, i\, odd} y_i\cdot (X_{u_i}-X_{u_{i-2}})\big)\Big)\nn\\
&\qquad\times \exp\Big(-\frac{1}{2}\frac{1-\beta(n)}{1+\overline{\beta}(n)}\Var\big(\sum\limits^{m}_{i=1, i\, odd} \sum^{m}_{j=i} ( y_{\sigma(j)}-y_{\sigma(j)+1} )\cdot \big(X_{v_i}-X_{v_{i-2}})\big)\Big)\, d\overline{y}\, d\overline{u}\, d\overline{v}
\end{align}
where $D^{m/2}_{odd,\ell}=\{na_{\ell}<u_1<u_3<\dots<u_{m-1}<nb_{\ell}\}$ for $\ell=1,2$, $d\overline{y}=\prod\limits^m_{i=1,odd} dy_i$,$d\overline{u}=\prod\limits^m_{i=1,odd} du_i$ and $d\overline{v}=\prod\limits^m_{i=1,odd} dv_i$.

By  the self-similarity property and a change of variables,  and taking into account  Lemma \ref{lema2.1},  we can show that the
 $\limsup$ in (\ref{lt1}) is equal to $\frac{2^{m}}{(2\pi)^{\frac{md}{2}}}\E\big[\Delta_{E}\Lambda\big]^{m}$. Therefore, 
$\limsup\limits_{n\to\infty}\sum\limits_{\sigma\in \mathscr{P}_1} \overline{I}^{n,\sigma}_{m,\gamma,3}$ is less than
\begin{align*}
 \frac{2^{m}}{(2\pi)^{\frac{md}{2}}}\E\big[\Delta_{E}\Lambda\big]^{m} \left(\int^{+\infty}_0\int^{+\infty}_0\int_{\R^d}|\widehat{f}(z)|^2 \exp\left(-\frac{\alpha_2}{2}|z|^2(u^{2H}+(1-\frac{m}{\gamma})v^{2H})\right)\, dz\, du\, dv\right)^{m/2}.
\end{align*}

Using similar arguments as above,  $\liminf\limits_{n\to\infty}\sum\limits_{\sigma\in \mathscr{P}_1} \overline{I}^{n,\sigma}_{m,\gamma,3}$ is greater than
\begin{align*}
\frac{2^{m}}{(2\pi)^{\frac{md}{2}}}\E\big[\Delta_{E}\Lambda\big]^{m} \left(\int^{+\infty}_0\int^{+\infty}_0\int_{\R^d}|\widehat{f}(z)|^2 1_{\{|z|>\gamma \varepsilon\}} \exp\left(-\frac{\alpha_2}{2}|z|^2(u^{2H}+(1+\frac{m}{\gamma})v^{2H})\right)\, dz\, du\, dv\right)^{m/2}.
\end{align*}

Therefore,
\begin{align*}
\limsup\limits_{n\to\infty} I^n_m
&\leq \limsup\limits_{n\to\infty}\sum_{\sigma\in \mathscr{P}_1} \overline{I}^{n,\sigma}_{m,\gamma,3}+c_{22} \int_{\R^{2d}} |\widehat{f}(x)|^2 |\widehat{f}(y)|^2|x|^{-\frac{2}{H}}|y|^{-\frac{2}{H}}\, 1_{\{|x-y|\leq 4m\varepsilon\}}\, dx\, dy\\
&\qquad\qquad\qquad\qquad+c_{23}\int_{|y|\leq \gamma \varepsilon} |\widehat{f}(y)|^2 |y|^{-\frac{2}{H}}\, dy
\end{align*} 
and
\begin{align*}
\liminf\limits_{n\to\infty} I^n_m
&\geq \liminf\limits_{n\to\infty}\sum_{\sigma\in \mathscr{P}_1} \overline{I}^{n,\sigma}_{m,\gamma,3}-c_{22}\int_{\R^{2d}} |\widehat{f}(x)|^2 |\widehat{f}(y)|^2|x|^{-\frac{2}{H}}|y|^{-\frac{2}{H}}\, 1_{\{|x-y|\leq 4m\varepsilon\}}\\
&\qquad\qquad\qquad\qquad-c_{23}\int_{|y|\leq \gamma \varepsilon} |\widehat{f}(y)|^2 |y|^{-\frac{2}{H}}\, dy.
\end{align*}
Taking $\varepsilon\to 0$ first and then $\gamma\to\infty$, 
\[
\lim\limits_{n\to\infty} I^n_m=D^{m/2}_{f,H,d}\, \E\big[\Delta_{E}\Lambda\big]^{m}.
\]

Recall $\limsup\limits_{n\to\infty} \left|\E[F_{n}(a_1,b_1; a_2, b_2)^{m}]-I^n_{m} \right|=0$ in \textbf{Step1}.  We obtain the desired convergence of even moments.
\end{proof}

\section{Proof of Theorem \ref{thm1}}

Let
\begin{equation}
F_{n}(t_1, t_2)=n^{\frac{Hd-2}{2}} \int^{nt_1}_0\int^{nt_2}_0 f(X^{(1)}_u-X^{(2)}_v)\, du\, dv.   \label{t.e.1}
\end{equation}
The proof of Theorem \ref{thm1} will be done in two steps. We first show tightness and then establish the convergence of moments. Tightness will be deduced from the following result.
\begin{proposition} \label{tight}
For any integer $m\geq 1$ and any $0\leq a_1, a_2, b_1, b_2\leq T$,
\begin{equation*}
\E\Big[\big(F_{n}(b_1,b_2)-F_{n}(a_1,a_2)\big)^{2m}\Big] \leq C\,
\Big[(|b_1-a_1|+|b_2-a_2|)^{1-\frac{Hd}{2}} \, N(f)^2\Big]^{m}\,,  \label{eq1.2}
\end{equation*}
where $C$ is a positive constant depending only on $H$, $d$, $m$ and $T$.
\end{proposition}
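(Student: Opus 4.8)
The plan is to reduce the bivariate increment to a sum of increments of $F_n$ over \emph{thin} rectangles, and then to estimate each such moment by combining the Cauchy--Schwarz factorization of Subsection 3.1 with the chaining bounds of Lemma \ref{chain}, taking there $\lambda=0$ so that the estimates become uniform in $n$ (which is what tightness needs, in contrast to the vanishing-in-$n$ estimates used in Proposition \ref{prop2.7}).

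First I would reduce to rectangles. Telescoping in one coordinate,
\[
F_{n}(b_1,b_2)-F_{n}(a_1,a_2)=\big[F_{n}(b_1,b_2)-F_{n}(a_1,b_2)\big]+\big[F_{n}(a_1,b_2)-F_{n}(a_1,a_2)\big],
\]
and, up to a sign, the first bracket equals $F_{n}(a_1\wedge b_1,a_1\vee b_1;0,b_2)$ while the second equals $F_{n}(0,a_1;a_2\wedge b_2,a_2\vee b_2)$, where $F_n(\cdot,\cdot;\cdot,\cdot)$ is the rectangle functional of Section 3. Both rectangles are \emph{thin}: the first has a first-coordinate side equal to $|b_1-a_1|$ and a second-coordinate side $\le T$, and symmetrically for the second. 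Using $|x+y|^{2m}\le 2^{2m-1}(|x|^{2m}+|y|^{2m})$, it then suffices to prove, uniformly in $n\ge1$,
\begin{equation} \label{e.tight.star}
\E\big[F_{n}(\alpha_1,\beta_1;\alpha_2,\beta_2)^{2m}\big]\le c\,N(f)^{2m}\,(\beta_1-\alpha_1)^{m(1-\frac{Hd}{2})}(\beta_2-\alpha_2)^{m(1-\frac{Hd}{2})},
\end{equation}
and then to bound the short sides by $|b_1-a_1|$ and $|b_2-a_2|$, each long side by $T$, and add, using $x^p+y^p\le 2(x+y)^p$ with $p=m(1-\frac{Hd}{2})>0$ (valid since $Hd<2$). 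This produces the constant $C=C(H,d,m,T)$.

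To prove \eref{e.tight.star} I would run the Cauchy--Schwarz step of Subsection 3.1 for a general, not necessarily square, rectangle. Separating the two independent copies gives
\[
\big|\E[F_{n}(\alpha_1,\beta_1;\alpha_2,\beta_2)^{2m}]\big|\le \frac{((2m)!)^2}{(2\pi)^{2md}}\,n^{m(Hd-2)}\,G(\alpha_1,\beta_1)^{1/2}\,G(\alpha_2,\beta_2)^{1/2},
\]
where $G(\alpha,\beta)=\int_{\R^{2md}}\prod_{i=1}^{2m}|\widehat{f}(y_i)|\,\big(I_{\alpha,\beta}(y)\big)^2\,dy$ depends only on the single interval $[\alpha,\beta]$. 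In the change of variables $s_1=u_1,\ s_i=u_i-u_{i-1}$, the nondeterminism bound \eref{eku1} produces the factor $\exp(-\frac{\kappa}{2}|x_1|^2 s_1^{2H})$ with $s_1\ge n\alpha$, so shifting the first point back to the origin only enlarges the integrand and yields $G(\alpha,\beta)\le G(0,\beta-\alpha)$. Moreover, by construction $n^{m(Hd-2)}G(0,\ell)$ is exactly the expression dominated by $\sum_{k}A_{k,2m}$ in the chaining argument with $a=0$, $b=\ell$; choosing $\lambda=0$ in Lemma \ref{chain} gives $A_{k,2m}\le c\,N(f)^{2m}\ell^{m(2-Hd)}$ for every $k$, with no negative power of $n$. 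Hence $G(0,\ell)\le c\,N(f)^{2m}\ell^{m(2-Hd)}n^{m(2-Hd)}$, and inserting this into the Cauchy--Schwarz bound the factors $n^{m(Hd-2)}$ and $n^{m(2-Hd)}$ cancel, producing exactly \eref{e.tight.star} since $\frac{2-Hd}{2}=1-\frac{Hd}{2}$.

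The main obstacle is keeping the bound uniform in $n$. The two delicate points are the reduction $G(\alpha,\beta)\le G(0,\beta-\alpha)$, which relies on the fact that the first increment enters the nondeterminism bound with the \emph{absolute} time $u_1^{2H}=s_1^{2H}$, so that a later starting time can only help, and the choice $\lambda=0$, without which the chaining exponent of $\ell$ would be $m(2-Hd)-\lambda$ rather than the homogeneous $m(2-Hd)$ required both for the clean H\"older exponent $1-\frac{Hd}{2}$ and for the exact cancellation of the powers of $n$. The remaining work — checking the two-rectangle decomposition for all sign configurations of the $a_\ell,b_\ell$ and the elementary inequalities — is routine.
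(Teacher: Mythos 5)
Your proposal is correct and follows essentially the same route as the paper's proof: the same telescoping into the two thin rectangles $[a_1\wedge b_1,a_1\vee b_1]\times[0,b_2]$ and $[0,a_1]\times[a_2\wedge b_2,a_2\vee b_2]$, the same Cauchy--Schwarz factorization from Subsection 3.1 separating the two independent copies, and the same application of Lemma \ref{chain} with $\lambda=0$ to get a bound uniform in $n$. You simply make explicit some steps the paper leaves implicit (the reduction $G(\alpha,\beta)\le G(0,\beta-\alpha)$ via monotonicity in $s_1$, and the cancellation of the powers of $n$).
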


\begin{proof}  Consider the decomposition
\begin{align*}
&F_{n}(b_1, b_2)-F_{n}(a_1, a_2)\\
&=n^{\frac{Hd-2}{2}}\int^{nb_1}_0\int^{nb_2}_0 f(X^{(1)}_u-X^{(2)}_v)\, du\, dv-n^{\frac{Hd-2}{2}} \int^{na_1}_0\int^{na_2}_0 f(X^{(1)}_u-X^{(2)}_v)\, du\, dv \\
&=n^{\frac{Hd-2}{2}}\int^{nb_1}_{na_1}\int^{nb_2}_{0} f(X^{(1)}_u-X^{(2)}_v)\, du\, dv+n^{\frac{Hd-2}{2}} \int^{na_1}_{0}\int^{nb_2}_{na_2} f(X^{(1)}_u-X^{(2)}_v)\, du\, dv \\
&=:I_1+I_2.
\end{align*}
So it suffices to show
\begin{equation}
\E\big(I_1^{2m}\big) \leq C\,
\Big[b_2^{1-\frac{Hd}{2}}|b_1-a_1|^{1-\frac{Hd}{2}} \, N(f)^2 \Big]^{m}\,.  \label{t.e.2}
\end{equation}
Note that
\begin{equation}
\E\big(I_1^{2m}\big)=n^{m(Hd-2)} \E \Big[\int_{D_0^{2m}}\prod_{i=1}^{2m} f(X^{(1)}_{u_i}-X^{(2)}_{v_i})\, du\, dv\Big], \label{t1.e.1}
\end{equation}
where $D_0=[na_1,nb_1]\times[0,nb_2]$.  

Using the arguments as in Subsection 3.1, we obtain
\begin{align*}
\E\big(I_1^{2m}\big)
&\leq c_1\, ( G_n(a_1,b_1))^{1/2}\, ( G_n(0,b_2))^{1/2},
\end{align*}
where 
\begin{align*}
G_n(a,b)
&=n^{m(Hd-2)}  \int_{\R^{md}} \int_{[na,nb]^{4m}}\prod _{i=1}^m| \widehat{f} (x_i-x_{i+1} ) |\\
&\qquad\qquad\qquad\qquad \times  \exp\bigg( -\frac {\kappa} 2   \sum_{i=1}^{m} |x_i|^{2H} (s_i^{2H} + r_i^{2H})\bigg) \,ds\, dr\, dx.
\end{align*}
By Lemma \ref{chain}, taking $\lambda =0$, we can write
\begin{align*}
G_n(0,b_2)\leq   c_2\, b_2^{m(2-Hd)}\, N(f)^{2m}
\end{align*}
and
\begin{align*}
G_n(a_1,b_1)\leq c_3\, (b_1-a_1)^{m(2-Hd)}\, N(f)^{2m}.
\end{align*}
Therefore,
\begin{align*}
\E\big(I_1^{2m}\big)
&\leq c_4\,b_2^{m(1-\frac{Hd}{2})}\, (b_1-a_1)^{m(1-\frac{Hd}{2})}\, N(f)^{2m}.
\end{align*}
This completes the proof.
\end{proof}

\medskip
In the remaining of this section, we prove that the moments of $F_n(t_1,t_2)$ converge to the
corresponding moments of $\Lambda(t_1,t_2)$. 

Fix a finite number of disjoint rectangles $E_i=(na_i, nb_i]\times(nc_i,nd_i]$ with
$i=1,\dots ,N$.  Let $\mathbf{m}=(m_1, \dots, m_N)$ be a fixed multi-index with $m_i\in\N$ for $i=1,\dots ,N$.
Set $\sum\limits_{i=1}^N m_i=|\mathbf{m}|$ and  $\prod\limits_{i=1}^N m_i!=\mathbf{m}!$. Recall the definition of $F_{n}(t_1, t_2)$ in \eref{t.e.1}. Let
\[
F_{n}(E_i)=n^{\frac{Hd-2}{2}}\int_{E_i} f(X^{(1)}_u-X^{(2)}_v)\, du\, dv.
\]
We need to consider the following sequence of random variables
 \[
        G_n=\prod_{i=1}^N \left(F_{n}(E_i) \right)^{m_i}
 \]
and compute  $\lim\limits_{n\rightarrow  \infty}  \E(G_n) $. Note that the expectation of $G_n$ can be written as 
\begin{align}  \label{EGn}
      \E(G_n) =n^{\frac{|\mathbf{m}|(Hd-2)} 2}  \E \bigg(
           \int_{D_\mathbf{m}}  \prod_{i=1}^N\prod_{j=1}^{m_i}f(X^{(1)}_{s^i_{j,1}}-X^{(2)}_{s^i_{j,2}})\, ds \bigg),
 \end{align}
where 
\begin{equation}
D_\mathbf{m}=\big\{s \in \R^{2|\mathbf{m}|}_+: na_i\leq s^i_{j,1}\leq nb_i, nc_i\leq s^i_{j,2}\leq nd_i,1\le i\le N, 1\leq j\leq m_i\big\}. \label{e.3.1.4}
\end{equation}

\medskip
To establish the convergence of moments, we need to consider two cases. One is that at least one of the exponents $m_i$ is odd. The other is that all exponents $m_i$ are even. We start with the convergence of moments in the first case.
\begin{proposition}  \label{odd} Suppose that at least one of the exponents $m_i$ is odd. Then
\begin{equation}
\lim_{n\to+\infty}\E(G_n)=0 \label{prop3.2}.
\end{equation}
\end{proposition}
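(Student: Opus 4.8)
The plan is to adapt the six-step scheme of Proposition~\ref{prop2.7} to the product $G_n$, keeping track of which of the disjoint rectangles $E_i$ each of the $|\mathbf{m}|$ factors belongs to; the point will be that the only configurations surviving in the limit correspond to matchings of the factors into pairs lying in a common rectangle, and that no such matching exists once some $m_i$ is odd. I would begin from \eref{EGn}, passing to the Fourier transform of $f$ as in \eref{e2}: this introduces one spatial variable per factor together with the two single-process Gaussian exponents for $X^{(1)}$ and $X^{(2)}$. Ordering the $|\mathbf{m}|$ first-process time points and applying the Cauchy-Schwarz inequality as in Subsection 3.1 decouples the two Gaussian exponents; after the change of variables $y_i=\sum_{j\ge i}x_j$ this produces integrals carrying the factor $\prod_i|\widehat{f}(x_i-x_{i+1})|$ and single-process densities bounded below through the nondeterminism property \eref{eku1}, which is the form on which the chaining estimates operate.

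Next I would run the chaining argument of Subsection 3.2 on the global ordering, replacing each product $\widehat{f}(x_{2k-1}-x_{2k})\widehat{f}(x_{2k}-x_{2k+1})$ by $|\widehat{f}(x_{2k})|^2$. By the estimates of Lemma~\ref{chain} (applied to the global ordering) every term of the resulting telescoping sum except the fully paired one carries a factor $n^{-\lambda}$ and hence vanishes as $n\to\infty$. In particular, if $|\mathbf{m}|$ is odd there is no fully paired term at all, since an odd number of factors admits no perfect matching, so $\E(G_n)=O(n^{-\lambda})\to0$ immediately.

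It therefore remains to treat the fully paired term when $|\mathbf{m}|$ is even. As in Steps 3--5 of Proposition~\ref{prop2.7}, this term decomposes over the permutations $\sigma$ matching the first- and second-process orderings, and the surviving $\sigma$ (the analogue of $\mathscr{P}_1$) are those whose pairs are close in both the $X^{(1)}$- and the $X^{(2)}$-times simultaneously. Because the rectangles $E_i$ are disjoint, any two of them have disjoint projection on at least one axis; a matched pair drawn from two different rectangles then either is separated by a distance of order $n$ in that coordinate --- incompatible with the small-gap constraint $n^{\lambda_0}\varepsilon$ of the pairing, since $\lambda_0<\frac14$ --- or is confined to a neighborhood of a shared boundary whose contribution is suppressed by the scaling factor $n^{\frac{|\mathbf{m}|(Hd-2)}{2}}$ together with Hypothesis~{\bf (H3)}, exactly as the mismatched permutations $\mathscr{P}_0$ were shown to give no contribution in Step 4. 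Hence every surviving matching pairs factors within a single rectangle, which forces each $m_i$ to be even; since by hypothesis some $m_i$ is odd, no admissible matching exists and the fully paired term also vanishes, giving \eref{prop3.2}.

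The main obstacle is precisely this rectangle-compatibility step: making rigorous that a matched pair must be co-rectangular when two rectangles may share one coordinate projection, so that the required separation has to be extracted from the other coordinate and fed into Hypothesis~{\bf (H3)}, and that the boundary configurations where both coordinates happen to be close carry negligible mass after the scaling by $n^{\frac{|\mathbf{m}|(Hd-2)}{2}}$. Carrying this out cleanly requires organizing the sum over the permutations of both processes and performing the $\varepsilon\to0$ and $\gamma\to\infty$ limits along the lines of Steps 4--6 of Proposition~\ref{prop2.7}.
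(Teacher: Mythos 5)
Your overall skeleton---Fourier representation, Cauchy--Schwarz decoupling of the two processes, the chaining bound of Lemma \ref{chain} to kill everything but the fully paired term, and then the observation that an admissible matching must pair time points lying in a common rectangle---is the same strategy the paper uses, and your disposal of the case $|\mathbf{m}|$ odd is exactly the paper's. But your treatment of the remaining case leaves a genuine gap precisely where you flag it, and the paper closes that gap by a device you do not use: it first subdivides the $E_i$ into a grid $\prod_\alpha\prod_\beta E_{\alpha,\beta}^{m_{\alpha,\beta}}$ with $E_{\alpha,\beta}=(na_\alpha,nb_\alpha]\times(nc_\beta,nd_\beta]$, so that on each axis the projections of any two subrectangles are either identical or disjoint, and then invokes the already-proved tightness (Proposition \ref{tight}) to assume $b_{\alpha-1}+\delta\le a_{\alpha}$ and $d_{\beta-1}+\delta\le c_{\beta}$ for some $\delta>0$. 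With this $\delta n$-separation, a matched pair straddling two subrectangles violates the small-gap constraint outright in the coordinate where the projections differ; there is no ``shared boundary'' configuration left to estimate, and Hypothesis {\bf (H3)} plays no role in this step (in the paper it enters only in Step 6 of Proposition \ref{prop2.7}, to decouple odd and even increments). Your proposal instead promises that the boundary slab is ``suppressed by the scaling factor together with {\bf (H3)},'' which is neither proved nor the mechanism that actually works; the configuration near an adjacent boundary looks locally identical to an interior pair, and what makes it negligible is that one location variable is confined to a window of width $n^{\lambda_0}\varepsilon$ rather than order $n$. This is fixable, but as written it is a hole.

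A second, structural difference: your dichotomy ``$|\mathbf{m}|$ odd versus $|\mathbf{m}|$ even'' is coarser than the paper's and makes the even case heavier than it needs to be. Setting $\overline m_{\alpha}=\sum_\beta m_{\alpha,\beta}$ and $\underline m_{\beta}=\sum_\alpha m_{\alpha,\beta}$, the paper first treats the case where some row or column sum is odd: after Cauchy--Schwarz, one of the two single-process factors already decays like $n^{-\lambda}$ by the one-dimensional chaining estimate (an odd number of time points in a separated strip admits no full pairing), while the other factor is uniformly bounded by the argument of Proposition \ref{tight}; no joint two-process pairing is needed there. Only when every $\overline m_{\alpha}$ and $\underline m_{\beta}$ is even but some $m_{\alpha,\beta}$ is odd does the paper deploy the pairing technique, and the row/column parity is exactly what permits the reduction to the canonical case $L=M=2$ with all four exponents odd, where one checks $\mathscr{P}_1=\emptyset$ and the $\mathscr{P}_0$ contribution vanishes as $\varepsilon\to 0$. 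Without this intermediate reduction, your claim that ``no admissible matching exists'' must be verified for an arbitrary configuration of rectangles and an arbitrary pair of orderings of the two time axes, which requires substantially more bookkeeping than your sketch indicates.
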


\begin{proof} 
The proof will be done in several steps.

\medskip \noindent
\textbf{Step 1} \quad We subdivide the disjoint rectangles $\{E_i: 1\leq i\leq N\}$ in such a way that $\prod^N_{i=1} E^{m_i}_i$
can be represented as the union of sets of the form $\prod^L_{\alpha=1}\prod^M_{\beta=1}E^{m_{\alpha,\beta}}_{\alpha,\beta}$ where $E_{\alpha,\beta} =(na_\alpha, nb_\alpha] \times (nc_\beta, nd_\beta]$, $m_{\alpha,\beta}\in\N\cup\{0\}$ and $\sum^L_{\alpha=1}\sum^M_{\beta=1}m_{\alpha,\beta}=|\mathbf{m}|$.  We also impose  $a_\alpha \le b_{\alpha+1}$ and $c_\beta \le d_{\beta+1}$ for each $\alpha, \beta$.
From the assumption, we see that at least one of the exponents $m_{\alpha, \beta}$ is odd. Therefore, to prove \eref{prop3.2}, it suffices to consider the convergence of moments of $F_n(t_1,t_2)$ on disjoint rectangles $\{E_{\alpha,\beta} =(na_\alpha, nb_\alpha] \times (nc_\beta, nd_\beta]: \alpha=1,\dots, L; \beta =1,\dots, M\}$. In this case, $\mathbf{m}$ is the multi-index $(m_{1,1} ,\dots, m_{N,M})$ and $
D_{\mathbf{m}}=\prod^L_{\alpha=1}\prod^M_{\beta=1} E^{m_{\alpha,\beta}}_{\alpha,\beta}$. We denote the points in $E^{m_{\alpha,\beta}}_{\alpha,\beta}$ by $\prod^{m_{\alpha, \beta}}_{j=1}(s^j_{\alpha,\beta}, u^j_{\alpha, \beta})$. By tightness, we can assume that $b_{\alpha-1} +\delta \le a_{\alpha}$ and $d_{\beta-1} +\delta \le c_{\beta}$ for all $\alpha $ and $\beta$ and some $\delta>0$.

 With the above notation we need to estimate
 \begin{eqnarray*}
 \E (G_n) &=&
 \frac{n^{\frac{|\mathbf{m}|(Hd-2)}{2}}}{(2\pi)^{|\mathbf{m}|d}}    \int_{\R^{|\mathbf{m}|d}}  \int_{D_\mathbf{m}}
            \prod^{L}_{\alpha =1}\prod^{M}_{\beta =1}\prod^{m_{\alpha,\beta}}_{j=1} \widehat{f}(\xi^j_{\alpha,\beta})  \\
 & & \qquad \times \exp\bigg(-\frac{1}{2}\Var\Big( \sum^{L}_{\alpha =1} \sum_{\beta =1}^M\sum^{m_{\alpha,\beta} }_{j=1} \xi^j_{\alpha,\beta} \cdot \big(X^{(1)}_{s^j_{\alpha,\beta}} - X^{(2)}_{u^j_{\alpha,\beta}} \big)\Big)\bigg)  \, ds\,  du\, d\xi.
\end{eqnarray*}

For the simplicity of notation, we set
 \[
 \Phi_n(\xi)=\prod^{L}_{\alpha =1}\prod^{M}_{\beta =1}\prod^{m_{\alpha,\beta}}_{j=1} \big|\widehat{f}(\xi^j_{\alpha,\beta})\big|,
 \]
 and
 \[
I(\xi)=\int_{D_\mathbf{m}}  \exp\bigg( -\frac{1}{2}\Var\Big( \sum^{L}_{\alpha =1}\sum_{\beta=1}^M\sum^{m_{\alpha, \beta}}_{j=1} \xi^j_{\alpha,\beta}\cdot  \big(X^{(1)}_{s^j_{\alpha,\beta}}- X^{(2)}_{u^j_{\alpha,\beta}} \big)\bigg)\,  ds\, du.
\]
Then,
\begin{equation} \label{eqodd}
|\E (G_n)|\leq \frac{n^{\frac{|\mathbf{m}|(2-Hd)}{2}}}{(2\pi)^{|\mathbf{m}|d}}    \int_{\R^{|\mathbf{m}|d}}  \Phi_n(\xi)\,  I(\xi)\, d\xi.
\end{equation}

Let $\overline{m}_{\alpha}=\sum^M_{\beta=1} m_{\alpha,\beta}$ and $\underline{m}_{\beta}=\sum^L_{\alpha=1} m_{\alpha,\beta}$ for $\alpha=1,\dots, L$ and $\beta=1,\dots, M$.

\medskip \noindent
\textbf{Step 2} \quad  We first consider the case when one of $\overline{m}_{\alpha}$s or $\underline{m}_{\beta}$s is odd. Let
\[
I_1(\xi)=\int_{\prod\limits^L_{\alpha=1}\prod\limits^M_{\beta=1} (na_{\alpha}, nb_{\alpha}]^{m_{\alpha,\beta}}}  \exp\bigg( -\frac{1}{2}\Var\Big( \sum^{L}_{\alpha =1}\sum_{\beta=1}^M\sum^{m_{\alpha, \beta}}_{j=1} \xi^j_{\alpha,\beta}\cdot  X^{(1)}_{s^j_{\alpha,\beta}} \Big)\bigg)\,  ds
\]
and
 \[
I_2(\xi)=\int_{\prod\limits^L_{\alpha=1}\prod\limits^M_{\beta=1} (nc_{\beta}, nd_{\beta}]^{m_{\alpha,\beta}}}  \exp\bigg( -\frac{1}{2}\Var\Big( \sum^{L}_{\alpha =1}\sum_{\beta=1}^M\sum^{m_{\alpha, \beta}}_{j=1} \xi^j_{\alpha,\beta}\cdot  X^{(2)}_{u^j_{\alpha,\beta}}  \Big)\bigg)\, du.
\]
Then,
\begin{equation} \label{eqodd}
|\E (G_n)|\leq \frac{n^{\frac{|\mathbf{m}|(Hd-2)}{2}}}{(2\pi)^{|\mathbf{m}|d}}    \int_{\R^{|\mathbf{m}|d}}  \Phi_n(\xi)\,  I_1(\xi)\, I_2(\xi)\, d\xi.
\end{equation}

Applying Cauchy-Schwarz inequality to the right-hand  side of \eref{eqodd} gives
\[
|\E (G_n)|\leq  c_1   \bigg(n^{\frac{|\mathbf{m}|(Hd-2)}{2}} \int_{\R^{|\mathbf{m}|d}} \Phi_n(\xi)\,  \big(I_1(\xi)\big)^2 \, d\xi\bigg)^{\frac{1}{2}} \bigg(n^{\frac{|\mathbf{m}|(Hd-2)}{2}}\int_{\R^{|\mathbf{m}|d}}  \Phi_n(\xi)\,  \big(I_2(\xi)\big)^2\, d\xi\bigg)^{\frac{1}{2}}.
\]

With loss of generality, we can assume that one of  the $\overline{m}_{\alpha}$s is odd. Let $\alpha_0=\min\{\alpha: \overline{m}_{\alpha}\; \text{is odd}\}$, $m_1=\sum^{\alpha_0}_{\alpha=1}\overline{m}_{\alpha}$ and $m_2=\sum^{L}_{\alpha_0+1}\overline{m}_{\alpha}$. We are going to show
\begin{equation} \label{eqodd1}
\lim\limits_{n\to+\infty}n^{\frac{|\mathbf{m}|(Hd-2)}{2}} \int_{\R^{|\mathbf{m}|d}} \Phi_n(\xi)\,  \big(I_1(\xi)\big)^2 \, d\xi=0.
\end{equation}
Note that $m_1$ is odd and $\prod^L_{\alpha=1}\prod^M_{\beta=1} (na_{\alpha}, nb_{\alpha}]^{m_{\alpha,\beta}}\subseteq (na_1,nb_{\alpha_0}]^{m_1}\times(na_{\alpha_0+1},nb_L]^{m_2}$. It suffices to show \eref{eqodd1} in the case that $L=2$, $M=1$ and $m_{1,1}$ is odd. This follows easily from Lemma \ref{chain}.

If one of the $\underline{m}_{\beta}$s is odd, we can use the above argument to show that 
\[
\lim\limits_{n\to+\infty}n^{\frac{|\mathbf{m}|(Hd-2)}{2}} \int_{\R^{|\mathbf{m}|d}} \Phi_n(\xi)\,  \big(I_2(\xi)\big)^2 \, d\xi=0.
\]
If all of $\underline{m}_{\beta}$s are even, we can replace $\prod^L_{\alpha=1}\prod^M_{\beta=1} (nc_{\beta}, nd_{\beta}]^{m_{\alpha,\beta}}$ in the definition of $I_2(\xi)$ with $(nc_1,nd_M]^{|\mathbf{m}|}$
and then use the arguments in the proof of Proposition \ref{tight} to show that 
\[
n^{\frac{|\mathbf{m}|(Hd-2)}{2}} \int_{\R^{|\mathbf{m}|d}} \Phi_n(\xi)\,  \big(I_2(\xi)\big)^2 \, d\xi
\]
are uniformly bounded in $n$. Combining these results gives $\lim\limits_{n\to+\infty}|\E(G_n)|=0$ when one of the $\overline{m}_{\alpha}$s or $\underline{m}_{\beta}$s is odd. 

\medskip \noindent
\textbf{Step 3} \quad Recall the definitions of $\overline{m}_{\alpha}$ and $\underline{m}_{\beta}$. We now consider the case when all $\overline{m}_{\alpha}$ and $\underline{m}_{\beta}$ are even. We know that at least one of the exponents $m_{\alpha, \beta}$ is odd. Let $\beta_0=\min\{\beta: m_{\alpha, \beta}\; \text{is odd}\}$ and $\alpha_0=\min\{\alpha: m_{\alpha, \beta_0}\; \text{is odd}\}$. Then
\[
D_{\mathbf{m}}=\prod^L_{\alpha=1}\prod^M_{\beta=1} E^{m_{\alpha,\beta}}_{\alpha,\beta}\subseteq \prod^4_{i=1} E^{m_i}_i,
\]
where 
\[
 m_1=\sum^{\alpha_0}_{\alpha=1}\sum^{\beta_0}_{\beta=1}m_{\alpha,\beta},\, m_2=\sum^{\alpha_0}_{\alpha=1}\sum^{M}_{\beta=\beta_0+1}m_{\alpha,\beta},\, m_3=\sum^{L}_{\alpha=\alpha_0+1}\sum^{\beta_0}_{\beta=1}m_{\alpha,\beta},\, m_4=\sum^{L}_{\alpha=\alpha_0+1}\sum^{M}_{\beta=\beta_0+1}m_{\alpha,\beta},
\]
\[
E_1=(na_1,nb_{\alpha_0}]\times (nc_1,nd_{\beta_0}],\; E_2=(na_1,nb_{\alpha_0}]\times (nc_{\beta_0+1}, nd_M],
\]
and
\[
E_3=(na_{\alpha_0+1}, nb_L]\times (nc_1,nd_{\beta_0}],\, E_4=(na_{\alpha_0+1}, nb_L]\times (nc_{\beta_0+1}, nd_M]. 
\]
Therefore, it suffices to show
\begin{align} \label{oddzero}
\lim_{n\to+\infty}n^{\frac{|\mathbf{m}|(Hd-2)}{2}} \int_{\R^{2|\mathbf{m}|d}}  \Phi_n(\xi)\,  I(\xi)\, d\xi=0
\end{align}
in the case that $L=2$, $M=2$ and $m_{1,1}$ is odd. By the arguments in {\bf Step 2}, we can assume that $m_{1,2}$, $m_{2,1}$ and $m_{2,2}$ are odd.  Using the paring technique as in Section 2, we have that $\mathscr{P}_1=\emptyset$ and 
\begin{align*}
\limsup_{n\to+\infty} n^{\frac{|\mathbf{m}|(Hd-2)}{2}} \int_{\R^{2|\mathbf{m}|d}}  \Phi_n(\xi)\,  I(\xi)\, d\xi
&\leq c_2\int_{\R^{2d}} |\widehat{f}(x)|^2|\widehat{f}(y)|^2|x|^{-\frac{2}{H}}|y|^{-\frac{2}{H}}1_{\{|x-y|\leq 4\varepsilon\sum\limits^2_{i=1}\sum\limits^2_{j=1}m_{i,j}\}}\, dx\, dy,
\end{align*}
where $\varepsilon$ is an arbitrary constant in $(0,1)$.

Letting $\varepsilon\downarrow 0$ then gives the desired result (\ref{oddzero}). This completes the proof.
\end{proof}

\medskip
Next we consider the convergence of moments when all exponents $m_i$ are even. Recall the definition of $\E(G_n)$ in \eref{EGn}. We observe
 \[
      \E(G_n) = \mathbf{m}!\, n^{\frac{|\mathbf{m}|(Hd-2)} 2}  \E \bigg(
           \int_{\overline{D}_\mathbf{m}}  \prod_{i=1}^N\prod_{j=1}^{m_i}f(X^{(1)}_{s^i_{j,1}}-X^{(2)}_{s^i_{j,2}})\, ds \bigg),
 \]
where 
\begin{align*}
\overline{D}_\mathbf{m}&=\Big\{s \in \R_+^{2|\mathbf{m}|}: na_i\leq s^i_{j,1}\leq nb_i, nc_i\leq s^i_{j,2}\leq nd_i,  s^i_{1,1}<s^i_{2,1}<\dots<s^i_{m_i,1}, 1\le i\le N, 1\leq j\leq m_i\Big\}.
\end{align*}

\begin{proposition}  \label{even} Suppose that all exponents $m_i$ are even. Then
\begin{equation} \label{prop3.3}
\lim_{n\to+\infty}\E(G_n)=D_{f,H,d}^{\frac{|\mathbf{m}|}2}\, \E \Big(\prod_{i=1}^N \big[\Delta_{E_i}\Lambda\big]^{m_i} \Big).
  \end{equation}
\end{proposition}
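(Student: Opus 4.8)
The plan is to run the six-step paring scheme of Proposition \ref{prop2.7} on a refined grid of rectangles, the one genuinely new point being that a surviving pairing must now join two time points lying in the \emph{same} subrectangle. First I would subdivide the disjoint rectangles exactly as in Step 1 of Proposition \ref{odd}, writing $\prod_{i=1}^N E_i^{m_i}$ as a finite union of grid products $\prod_{\alpha=1}^{L}\prod_{\beta=1}^{M} E_{\alpha,\beta}^{m_{\alpha,\beta}}$ with $E_{\alpha,\beta}=(na_\alpha,nb_\alpha]\times(nc_\beta,nd_\beta]$ and $\sum_{\alpha,\beta}m_{\alpha,\beta}=|\mathbf m|$, and, using Proposition \ref{tight}, insert a gap $\delta>0$ between consecutive intervals on each axis. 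After this reduction the $X^{(1)}$-times of the points in a cell $E_{\alpha,\beta}$ lie in $(na_\alpha,nb_\alpha]$ and their $X^{(2)}$-times in $(nc_\beta,nd_\beta]$, so the $X^{(1)}$-points are grouped by $\alpha$ with $\overline m_\alpha=\sum_\beta m_{\alpha,\beta}$ points and the $X^{(2)}$-points by $\beta$ with $\underline m_\beta=\sum_\alpha m_{\alpha,\beta}$ points, which is exactly the bookkeeping of Proposition \ref{odd}.

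I would then pass to the Fourier representation of $\E(G_n)$, order the $X^{(1)}$- and $X^{(2)}$-times within their groups, and introduce the permutation $\sigma$ relating the two orderings. Localizing to the analogue of the good domain $O_{m,1}\times O_{m,2}$, where even time-differences are at most $n^{\lambda_0}\varepsilon$ and odd ones at least $n^{1/2}\varepsilon$, and confining the odd frequencies to $|y_{\mathrm{odd}}|<\varepsilon$, the chaining estimate of Lemma \ref{chain} shows the complement is negligible, just as in Steps 1--2 there. Splitting $\mathscr P=\mathscr P_0\cup\mathscr P_1$ according to whether $\sigma$ preserves the consecutive pairing $\{2k-1,2k\}$, the $\mathscr P_0$ part vanishes verbatim as in Step 4: a broken pair forces $|y_{2k}\pm y_{2\ell}|\le c\,\varepsilon$, and the resulting integral $\int |\widehat f(x)|^2|\widehat f(y)|^2|x|^{-2/H}|y|^{-2/H}\,{\bf 1}_{\{|x\pm y|\le c\varepsilon\}}\,dx\,dy$ tends to $0$ with $\varepsilon$.

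The genuinely new step is to rule out \emph{cross-cell} pairings among $\sigma\in\mathscr P_1$. Fix such a $\sigma$ and a pair $\{2k-1,2k\}$ of $X^{(1)}$-ranks; being consecutive, these points are close on $O_{m,1}$ and hence share the index $\alpha$. Since $\sigma\in\mathscr P_1$ sends the pair to a pair $\{2j-1,2j\}$ of $X^{(2)}$-ranks, the same two points are consecutive on the $X^{(2)}$-side, hence close on $O_{m,2}$, so they share $\beta$ as well — were $\beta\neq\beta'$, their $X^{(2)}$-times would sit in the $\delta$-separated intervals $(nc_\beta,nd_\beta]$ and $(nc_{\beta'},nd_{\beta'}]$, forcing an even difference of size $\gtrsim n\delta\gg n^{\lambda_0}\varepsilon$ and emptying that part of $O_{m,2}$. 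Thus on the localized domain every surviving pair lies in one cell $E_{\alpha,\beta}$; a grid term therefore contributes only when every $m_{\alpha,\beta}$ is even (any odd $\overline m_\alpha$ or $\underline m_\beta$ already being killed by Step 2 of Proposition \ref{odd}), and then the pairing is forced to be diagonal. This is precisely the conditional independence of the increments $\Delta_{E_{\alpha,\beta}}\Lambda$ resurfacing on the $F_n$ side.

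Finally, on each cell I would extract the limit as in Steps 5--6 of Proposition \ref{prop2.7}: Hypothesis {\bf (H3)} decouples the odd- and even-indexed blocks of both quadratic forms up to a factor $\beta(n^{1/2-\lambda_0})\to0$, Hypothesis {\bf (H2)} replaces each even-index variance by $\alpha_2(\Delta u_i)^{2H}$, and integrating out the $m_{\alpha,\beta}/2$ even frequencies in the cell yields a factor $D_{f,H,d}^{m_{\alpha,\beta}/2}$; the product over all cells gives $D_{f,H,d}^{|\mathbf m|/2}$. By self-similar scaling the remaining integrals over the odd time points — now spread over all the rectangles $E_i$ — converge, via Lemma \ref{lema2.1} applied to the full family $\{E_i\}$, to $\E\big(\prod_{i=1}^N[\Delta_{E_i}\Lambda]^{m_i}\big)$ after summing over cell assignments and diagonal pairings and letting $\varepsilon\to0$, $\gamma\to\infty$. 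I expect the main obstacle to be this last combinatorial bookkeeping: one must verify that the surviving diagonal pairings, together with a single permutation $\sigma$ matching the two processes simultaneously across all cells, reproduce exactly the coupled determinant (Wick) structure of Lemma \ref{lema2.1} — where the correlations between different $\Delta_{E_i}\Lambda$ enter through the covariance matrix $A$ — rather than an uncoupled product over the individual rectangles.
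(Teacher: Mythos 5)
Your proposal is correct and follows essentially the same route as the paper: grid subdivision with a $\delta$-gap, reduction to all-even cell exponents via Proposition \ref{odd}, the paring technique to eliminate mismatched pairings, and Lemma \ref{lema2.1} applied to the full family of rectangles to identify the coupled limit. The only notable difference is that you justify the same-cell property of surviving pairs analytically, via the $\delta$-separation and the localization to $O_{m,1}\times O_{m,2}$, whereas the paper establishes the equivalent structural claim about $\mathscr{P}_1$ by a combinatorial induction on $M$; both arguments are valid and yield the same count $2^{|\mathbf m|/2}\prod_{\alpha,\beta}(m_{\alpha,\beta}/2)!$ of contributing pairings.
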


\begin{proof}   We subdivide the  disjoint rectangles $\{E_i: 1\leq i\leq N\}$ in such a way that $\prod^N_{i=1} E^{m_i}_i$
can be represented as the union of sets of the form $\prod^L_{\alpha=1}\prod^M_{\beta=1}E^{m_{\alpha,\beta}}_{\alpha,\beta}$ where $E_{\alpha,\beta} =(na_\alpha, nb_\alpha] \times (nc_\beta, nd_\beta]$, $m_{\alpha,\beta}\in\N\cup\{0\}$ and $\sum^L_{\alpha=1}\sum^M_{\beta=1}m_{\alpha,\beta}=|\mathbf{m}|$.   We also assume  $a_\alpha \le b_{\alpha+1}$ and $c_\beta \le d_{\beta+1}$ for each $\alpha, \beta$.By Proposition \ref{odd}, we can assume that all exponents $m_{\alpha, \beta}$ are even. Therefore, to prove \eref{prop3.3}, it suffices to consider the convergence of moments of $F_n(t_1,t_2)$ on disjoint rectangles $\{E_{\alpha,\beta} =(na_\alpha, nb_\alpha] \times (nc_\beta, nd_\beta]: \alpha=1,\dots, L; \beta =1,\dots, M\}$. In this case, $\mathbf{m}$ is the multi-index $(m_{1,1} ,\dots, m_{N,M})$ and $
D_{\mathbf{m}}=\prod^L_{\alpha=1}\prod^M_{\beta=1} E^{m_{\alpha,\beta}}_{\alpha,\beta}$. We denote the points in $E^{m_{\alpha,\beta}}_{\alpha,\beta}$ by $\prod^{m_{\alpha, \beta}}_{j=1}(s_{\alpha,\beta,j}, u_{\alpha, \beta,j})$. By tightness, we can assume that $b_{\alpha-1} +\delta \le a_{\alpha}$ and $d_{\beta-1} +\delta \le c_{\beta}$ for all $\alpha $ and $\beta$ and some $\delta>0$. 

 With the above notation we need to compute the limit of 
 \begin{align} \label{gn1}
 \E (G_{n,1}) &=
 \frac{n^{\frac{|\mathbf{m}|(Hd-2)}{2}}}{(2\pi)^{|\mathbf{m}|d}}    \int_{\R^{|\mathbf{m}|d}}  \int_{\prod^L_{\alpha=1}\prod^M_{\beta=1} E^{m_{\alpha,\beta}}_{\alpha,\beta}}
            \prod^{L}_{\alpha =1}\prod^{M}_{\beta =1}\prod^{m_{\alpha,\beta}}_{j=1} \widehat{f}(\xi_{\alpha,\beta,j})  \nonumber\\
 &  \qquad \times \exp\bigg(-\frac{1}{2}\Var\Big( \sum^{L}_{\alpha =1} \sum_{\beta =1}^M\sum^{m_{\alpha,\beta} }_{j=1} \xi_{\alpha,\beta,j} \cdot \big(X^{(1)}_{s_{\alpha,\beta,j}} - X^{(2)}_{u_{\alpha,\beta,j}} \big)\Big)\bigg)  \, ds\,  du\, d\xi.
\end{align}

We can first put $s_{\alpha,\beta,j}$ and $u_{\alpha,\beta,j}$ with $\alpha=1,\dots, L,\, \beta=1,\dots, M,\, j=1,\dots, m_{\alpha,\beta}$ in the increasing order, respectively.  Let $\overline{m}_{\alpha}=\sum^M_{\beta=1} m_{\alpha,\beta}$ and $\underline{m}_{\beta}=\sum^L_{\alpha=1} m_{\alpha,\beta}$ for $\alpha=1,\dots, L$ and $\beta=1,\dots, M$. We see that there are only $\prod^{L}_{\alpha=1} \overline{m}_{\alpha}!$  permutations for $s_{\alpha,\beta,j}$ and $\prod^{M}_{\beta=1} \underline{m}_{\beta}!$ permutations for $u_{\alpha,\beta,j}$. So the total number of permutations is  $\prod^{L}_{\alpha=1} \overline{m}_{\alpha}!\prod^{M}_{\beta=1} \underline{m}_{\beta}!$. However, not all these permutations will contribute to the limit of $\E (G_{n,1})$ as $n\to\infty$. According to the paring technique, we will see that there are only $\prod^{L}_{\alpha=1} \prod^{M}_{\beta=1} 2^{\frac{m_{\alpha,\beta}}{2}} (\frac{m_{\alpha,\beta}}{2})!$ of them contributing to $\lim\limits_{n\to\infty}  \E (G_{n,1}) $ and they contribute equally.

Set $D_{L, M}=\{(\alpha,\beta,j): \alpha=1,\dots, L,\, \beta=1,\dots, M,\, j=1,\dots m_{\alpha,\beta}\}$. For points 
\[
\prod^L_{\alpha=1}\prod^M_{\beta=1}\prod^{m_{\alpha, \beta}}_{j=1}(s_{\alpha,\beta,j}, u_{\alpha, \beta,j})\in \prod^L_{\alpha=1}\prod^M_{\beta=1} E^{m_{\alpha,\beta}}_{\alpha,\beta},
\]
we let $\underline{\mathscr{P}}$ be the set of all bijections $\underline{R}$ from $D_{L,M}$ to $\{1,\dots, |\mathbf{m}|\}$ such that $\underline{R}(\alpha,\beta,j)<\underline{R}(\alpha',\beta',j')$ if and only if $s_{\alpha,\beta,j}<s_{\alpha',\beta',j'}$,  $\overline{\mathscr{P}}$ the set of all bijections $\overline{R}$ from $D_{L,M}$ to $\{1,\dots, |\mathbf{m}|\}$ such that $\overline{R}(\alpha,\beta,j)<\overline{R}(\alpha',\beta',j')$ if and only if $u_{\alpha,\beta,j}<u_{\alpha',\beta',j'}$.

Define $\mathscr{P}=\{(\underline{R},\overline{R}): \underline{R}\in \underline{\mathscr{P}},\, \overline{R}\in\overline{\mathscr{P}} \}$ and $\mathscr{P}_1$ to be the subset of $\mathscr{P}$ with its elements $(\underline{R},\overline{R})$ satisfying  
\begin{align} \label{pmt}
&\Big\{\{\underline{R}^{-1}(2i), \underline{R}^{-1}(2i-1)\}: i=1,\dots, |\mathbf{m}|/2 \Big\} \nonumber\\
&\qquad\qquad\qquad = \Big\{\{\overline{R}^{-1}(2i), \overline{R}^{-1}(2i-1)\}: i=1,\dots, |\mathbf{m}|/2 \Big\},
\end{align}
where $\underline{R}^{-1}$ and $\overline{R}^{-1}$ are inverses of $\underline{R}$ and $\overline{R}$, respectively.

For any $(\underline{R},\overline{R})\in\mathscr{P}$, define 
\begin{align*}
I^n_{\underline{R},\overline{R}} \label{INRR}
&= \frac{n^{\frac{|\mathbf{m}|(Hd-2)}{2}}}{(2\pi)^{|\mathbf{m}|d}} \int_{\R^{|\mathbf{m}|d}}  \int_{D_\mathbf{m}}
            \prod^{|\mathbf{m}|}_{\ell=1} \widehat{f}\big(\xi_{\underline{R}^{-1}(\ell)}\big)  \nonumber \\
 &\qquad\times \exp\bigg(-\frac{1}{2}\Var\Big( \sum^{|\mathbf{m}|}_{\ell=1} \xi_{\underline{R}^{-1}(\ell)} \cdot X^{(1)}_{s_{\underline{R}^{-1}(\ell)}} \Big)-\frac{1}{2}\Var\Big( \sum^{|\mathbf{m}|}_{\ell=1} \xi_{\overline{R}^{-1}(\ell)} \cdot X^{(2)}_{u_{\overline{R}^{-1}(\ell)}} \Big)\bigg)  \, ds\,  du\, d\xi.
\end{align*}
Then
 \begin{align*}
\E (G_{n,1}) &=\sum_{(\underline{R},\overline{R})\in\mathscr{P}} I^n_{\underline{R},\overline{R}}.
\end{align*}

For any $(\underline{R},\overline{R})\in\mathscr{P}-\mathscr{P}_1$,  using the paring technique as in the proof of Proposition \ref{prop2.7}, 
 \begin{align*}
\lim_{n\to+\infty} |I^n_{\underline{R},\overline{R}}|=0.
\end{align*}
That is, 
 \begin{align*}
\limsup_{n\to+\infty} |\E (G_{n,1})-\sum_{(\underline{R},\overline{R})\in\mathscr{P}_1} I^n_{\underline{R},\overline{R}}|=0.
\end{align*}

So we only need to find the limit of $\sum\limits_{(\underline{R},\overline{R})\in\mathscr{P}_1} I^n_{\underline{R},\overline{R}}$ as $n$ tends to $+\infty$. For any $(\underline{R},\overline{R})\in\mathscr{P}_1$, we  claim that (a) $\underline{R}^{-1}(2i)$ and $\underline{R}^{-1}(2i-1)$ only differ in the last element for all $i=1,\dots, |\mathbf{m}|/2$; (b) $\overline{R}^{-1}(2i)$ and $\overline{R}^{-1}(2i-1)$ only differ in the last element for all $i=1,\dots, |\mathbf{m}|/2$. This claim will be proved by induction. When $M=1$, using the ordering of $u_{\alpha,\beta,j}$s and (\ref{pmt}), we can easily see that the claim is true. Assume that the claim is true when $M=M_0$. We only need to show that it is also true when $M=M_0+1$. In this case, for elements $(s_{\alpha,\beta,j}, u_{\alpha,\beta,j})$ in $E^{m_{\alpha,\beta}}_{\alpha,\beta}$ with $\alpha=1,\dots, L$, $\beta=2,\dots, M_0+1$ and $j=1,\dots, m_{\alpha,\beta}$, by induction, the claim is true. Recall that $u_{\alpha', 1,j'}$s are strictly less than $u_{\alpha, \beta,j}$s whenever $\beta\geq 2$. This implies that the claim is true for all elements $(s_{\alpha,\beta,j}, u_{\alpha,\beta,j})$ in $E^{m_{\alpha,\beta}}_{\alpha,\beta}$ with $\alpha=1,\dots, L$, $\beta=1,\dots, M_0+1$ and $j=1,\dots, m_{\alpha,\beta}$. Therefore, the total number of elements in $\mathscr{P}_1$ is  $\prod^{L}_{\alpha=1} \prod^{M}_{\beta=1} 2^{\frac{m_{\alpha,\beta}}{2}} (\frac{m_{\alpha,\beta}}{2})!=2^{\frac{|\mathbf{m}|}{2}}\prod^{L}_{\alpha=1} \prod^{M}_{\beta=1}  (\frac{m_{\alpha,\beta}}{2})!$. 

Thanks to the claim above,  using similar arguments as in the proof of Proposition \ref{prop2.7} and then Lemma \ref{lema2.1}, we  obtain that 
\[
\lim_{n\to+\infty}\sum\limits_{(\underline{R},\overline{R})\in\mathscr{P}_1} I^n_{\underline{R},\overline{R}}= (D_{f,H,d})^{\frac{|\mathbf{m}|}{2}}   \E\Big(\prod^{L}_{\alpha=1} \prod^{M}_{\beta=1} [\Delta_{E_{\alpha,\beta}}\Lambda]^{m_{\alpha,\beta}} \Big).
\]
Therefore, 
\[
\lim_{n\to+\infty} \E (G_{n,1})= (D_{f,H,d})^{\frac{|\mathbf{m}|}{2}}   \E\Big(\prod^{L}_{\alpha=1} \prod^{M}_{\beta=1} [\Delta_{E_{\alpha,\beta}}\Lambda]^{m_{\alpha,\beta}} \Big).
\]
Recall the definition of $ \E (G_{n,1})$ in (\ref{gn1}) and the fact that $\prod^N_{i=1} E^{m_i}_i$
is represented as the union of sets of the form $\prod^L_{\alpha=1}\prod^M_{\beta=1}E^{m_{\alpha,\beta}}_{\alpha,\beta}$ with $\sum^L_{\alpha=1}\sum^M_{\beta=1}m_{\alpha,\beta}=|\mathbf{m}|$. We have
\[
\lim_{n\to+\infty} \E(G_n)= (D_{f,H,d})^{\frac{|\mathbf{m}|}{2}}   \E \Big(\prod_{i=1}^N \big[\Delta_{E_i}\Lambda\big]^{m_i} \Big).
\]
This gives the desired result. \end{proof}

\bigskip

\noindent
{\bf Proof of Theorem \ref{thm1}}:  This follows from Propositions \ref{tight}, \ref{odd} and \ref{even}.

\bigskip
\bigskip

$\begin{array}{cc}
\begin{minipage}[t]{1\textwidth}
{\bf David Nualart}\\
Department of Mathematics, University of Kansas, Lawrence, Kansas 66045, USA\\
\texttt{nulart@math.ku.edu}
\end{minipage}
\hfill
\end{array}$

\medskip

$\begin{array}{cc}
\begin{minipage}[t]{1\textwidth}
{\bf Fangjun Xu}\\
School of Statistics, East China Normal University, Shanghai 200241, China \\
NYU-ECNU Institute of Mathematical Sciences, NYU Shanghai, Shanghai 200062, China\\
\texttt{fangjunxu@gmail.com, fjxu@finance.ecnu.edu.cn}
\end{minipage}
\hfill
\end{array}$

\end{document}